\documentclass[11pt]{amsart}

\usepackage[T1]{fontenc}
\usepackage{lmodern}
\usepackage{amsmath,amssymb,mathtools}
\numberwithin{equation}{section}
\usepackage[margin=1in]{geometry}
\usepackage{microtype}
\usepackage{xcolor}
\usepackage{enumitem}
\usepackage[colorlinks=true,linkcolor=blue!55!black,
  citecolor=blue!55!black,urlcolor=blue!65!black]{hyperref}
\hypersetup{
  pdftitle={Failure of Fixed-Profile Modified Scattering at the Pure L2 Endpoint
    for the One-Dimensional Defocusing Cubic NLS},
  pdfauthor={Anonymous},
  pdfsubject={Long-time asymptotics for the one-dimensional defocusing cubic NLS},
  pdfkeywords={defocusing cubic NLS, modified scattering, L2 endpoint,
    Zakharov-Shabat scattering, Dirac operator, gliding hump}}

\newtheorem{theorem}{Theorem}[section]
\newtheorem{proposition}[theorem]{Proposition}
\newtheorem{lemma}[theorem]{Lemma}

\theoremstyle{remark}
\newtheorem{remark}[theorem]{Remark}

\newcommand{\R}{\mathbb R}
\newcommand{\C}{\mathbb C}
\newcommand{\ii}{\mathrm i}
\newcommand{\dd}{\,\mathrm d}
\newcommand{\F}{\mathcal F}
\newcommand{\Sch}{\mathcal S}
\newcommand{\cT}{\mathcal T}
\newcommand{\sgn}{\operatorname{sgn}}
\allowdisplaybreaks

\title[Failure of fixed-profile modified scattering]
{Failure of Fixed-Profile Modified Scattering at the Pure $L^2$
Endpoint for the 1D Defocusing Cubic NLS}
\author{Xi Chen}
\date{}
\address{Department of Mathematics and Computer Science, University of Basel, Spiegelgasse 1, 4051 Basel, Switzerland}
\email{xi01.chen@unibas.ch}
\keywords{1D defocusing cubic NLS, modified scattering, pure $L^2$ endpoint}

\begin{document}

\begin{abstract}
We prove that the standard fixed-profile modified-scattering ansatz fails at
the unweighted $L^2$ endpoint for the one-dimensional defocusing cubic
nonlinear Schr\"odinger equation.  More precisely, there exists a real-valued
datum
\[
 q_*\in L^1(\R)\cap\bigcap_{k\geq0}H^k(\R),
 \qquad xq_*\notin L^2(\R),
\]
for which the corrected Fourier profile has no strong $L^2$ limit.  The
$L^2$ norm of the datum may be prescribed arbitrarily.  The construction is
an inductively chosen sum of disjoint smooth bumps.  Exact composition of the
Zakharov--Shabat transfer matrices inserts a high-frequency oscillation into
the logarithm of the transmission coefficient, while the one-sided
logarithmic operator in the Deift--Zhou phase amplifies an insertion of size
$\epsilon_n$ by a factor of order $\log X_n$.  Choosing
$\epsilon_n\log X_n=\kappa$ produces a uniform separation between consecutive
smooth asymptotic profiles even though the partial data converge in
$L^1\cap L^2$.  The obstruction is specific to a single time-independent
profile and leaves open adaptive or scale-dependent renormalizations.
\end{abstract}

\maketitle

\section{Introduction and main result}

We study
\begin{equation}
 \ii q_t+q_{xx}=2|q|^2q,
 \qquad q(0)=q_*\in L^2(\R),                            \label{eq:1.1}
\end{equation}
with the unitary Fourier transform
\begin{equation}
 \mathcal{F}f(\xi) := \widehat f(\xi) =(2\pi)^{-1/2}
 \int_{\R}e^{-\ii x\xi}f(x)\dd x,
 \qquad
 F_q(t,\xi)=e^{\ii t\xi^2}\widehat q(t,\xi).           \label{eq:1.2}
\end{equation}
The fixed-profile modified-scattering assertion considered here is the
existence of $V\in L^2$ such that
\begin{equation}
 \left\|F_q(t)-e^{-\ii(\log t)|V|^2}V\right\|_{L_{\xi}^2}
 \longrightarrow0 \text{ as } t \to + \infty.                                    \label{eq:1.3}
\end{equation}
Equivalently, by the exact free evolution, this is the physical-space
approximation
\begin{equation}
 q(t)=e^{\ii t\partial_x^2}\F^{-1}
       \left(e^{-\ii(\log t)|V|^2}V\right)+o_{L^2}(1).  \label{eq:1.4}
\end{equation}
The exact free group is retained in \eqref{eq:1.4}; no pointwise stationary
phase replacement is built into the definition.

Modified scattering of this type is classical under additional localization
or regularity.  In the pure-cubic specialization, Ozawa constructed modified
wave operators for weighted final states subject to a small Fourier-side
$L^\infty$ condition; their domains form dense subsets of neighborhoods of the
origin in $L^2(\R)$ and $H^1(\R)$.
This is a final-state result rather than a completeness theorem for every small
$L^2$ Cauchy datum \cite[Theorems~1 and~2]{Ozawa}.  For initial data satisfying
$u_0\in H^1(\R)$ and $\widehat u_0\in H^1(\R)$ and sufficiently small in the
sum of these two norms, Hayashi and Naumkin proved
modified scattering for the Cauchy problem together with the sharp
$t^{-1/2}$ decay in the uniform norm
\cite[Theorems~1.1 and~1.2]{HayashiNaumkin}.

For the integrable defocusing equation, Deift and Zhou proved, for arbitrary
\[
 q_0\in H^{1,1}(\R):=\{f\in H^1(\R):xf\in L^2(\R)\},
\]
a ray asymptotic expansion with an $O(t^{-1/2-\kappa})$ remainder uniform in
$x$, for every $0<\kappa<1/4$ \cite[Theorem~1.1]{DeiftZhou}.  Their theorem is
formulated in physical ray variables; the exact Fourier-profile conversion
needed here is carried out below for smooth compactly supported data.  After
matching equation and Fourier normalizations, the Hayashi--Naumkin formula
has the same fixed-amplitude, logarithmic-phase structure as \eqref{eq:1.3}.
For the pure-cubic specializations of the wave-operator results, their weighted
hypotheses make the stated comparison dynamics asymptotically equivalent in
$L^2$ to the corresponding Fourier-profile form.  This follows from the
metaplectic factorization of the free group together with the stated weighted
regularity of the prescribed final states; the equivalence is not used below.
The wave-operator statements concern Cauchy data in the ranges of the
constructed operators and do not establish \eqref{eq:1.3} for every weighted
datum.

Recent work has expanded the final-state theory.  Specializing the defocusing
result of Kawamoto and Mizutani to the pure cubic equation, they constructed
modified wave operators for prescribed scattering states $u_+$ with
$\widehat u_+\in H^{1+\varepsilon}(\R)$, without a size restriction
\cite[Theorem~1.2]{KawamotoMizutani}.  Georgiev and Ozawa proved a conditional
modified-scattering criterion: if $\widehat u_0\in H^1(\R)$ and
the global one-dimensional solution obeys
$\|u(t)\|_\infty\lesssim t^{-1/2}$, then it has a strong $L^2$ final state
after a solution-dependent phase renormalization
\cite[Theorem~2.1]{GeorgievOzawa}.  This adaptive phase is not the fixed
logarithmic phase in \eqref{eq:1.3}.

At the unweighted endpoint, Tsutsumi's theorem gives global $L^2$
well-posedness, mass conservation, and fixed-time continuous dependence
\cite[Theorem~1.1, (1.5)--(1.6), and Corollary~1.2]{Tsutsumi}.  Ifrim and
Tataru additionally obtained global spacetime and bilinear estimates for
nonlocalized $L^2$ solutions
\cite[Theorem~2, (1.12)--(1.13)]{IfrimTataru}.
These results do not provide a fixed strong asymptotic profile for every
$L^2$ datum.  The following theorem shows that this gap is genuine.

\begin{theorem}[Failure of fixed-profile modified scattering]
\label{thm:main}
There is a real-valued function
\begin{equation}
 q_*\in L^1(\R)\cap\bigcap_{k=0}^{\infty}H^k(\R),
 \qquad xq_*\notin L^2(\R),                            \label{eq:1.5}
\end{equation}
for which no $V\in L^2(\R)$ satisfies \eqref{eq:1.3}.  Moreover, for every
$M>0$ there is such a counterexample with $\|q_*\|_{L^2}=M$.
\end{theorem}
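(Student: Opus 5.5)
We construct $q_*=\sum_{n\ge1}\epsilon_n\phi_n$ as an inductively chosen sum of smooth real bumps with disjoint supports, $\phi_n(x)=\phi(x-X_n)$ with $\phi\in C_c^\infty$ fixed (possibly modulated by a real high-frequency factor $\cos(\omega_n x)$ to keep the scattering data controlled), and amplitudes $\epsilon_n=\kappa/\log X_n$. The strategy rests on the inverse scattering description. For each finite partial sum $q^{(N)}=\sum_{n\le N}\epsilon_n\phi_n$, which is smooth and compactly supported, the Deift--Zhou asymptotics (converted to Fourier-profile form, as the paper announces) give \eqref{eq:1.3} with an explicit profile $V_N$. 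Here $|V_N(\xi)|^2=-\frac1{2\pi}\log(1-|r_N(\xi)|^2)=-\frac1{\pi}\log|a_N(\xi)|$ up to normalization, and the phase of $V_N$ contains the Deift--Zhou term $\frac1{\pi}\int_{-\infty}^{\xi}\log|\xi-s|\,\mathrm d\log(1-|r_N(s)|^2)$, i.e.\ a one-sided logarithmic operator applied to $\log|a_N|^2$. We then argue by contradiction. Mass conservation and Tsutsumi's $L^2$ continuous dependence imply that if $q^{(N)}\to q_*$ in $L^2$, and $q_*$ admitted a profile $V$, then $V$ would be close to the $V_N$ in a suitable uniform-in-time sense. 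The theorem therefore reduces to showing $\|V_{N+1}-V_N\|_{L^2}\ge c\kappa>0$ uniformly, while $q^{(N)}\to q_*$ in $L^1\cap L^2$ (so $\sum\epsilon_n<\infty$ is needed; e.g.\ $X_n=\exp(n^2)$ gives $\epsilon_n=\kappa/n^2$).

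The key steps, in order, are as follows.

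(i) Exact composition. Since the supports are disjoint, the Zakharov--Shabat transfer matrix of $q^{(N+1)}$ is the product of that of $q^{(N)}$ and that of the translated bump. Translation by $X_{N+1}$ conjugates by $e^{\pm\ii X_{N+1}\xi}$, so the transmission coefficient satisfies
\[
a_{N+1}(\xi)=a_N(\xi)a_{\phi}(\xi)+e^{2\ii X_{N+1}\xi}\,\overline{b_N(\xi)}\,b_\phi(\xi)\cdot(\text{bounded}),
\]
with $b_\phi=O(\epsilon_{N+1})$. Consequently $\log|a_{N+1}|^2-\log|a_N|^2$ equals a slowly varying $O(\epsilon_{N+1}^2)$ term plus an insertion $\epsilon_{N+1}\,\mathrm{Re}\bigl(e^{2\ii X_{N+1}\xi}g_N(\xi)\bigr)+O(\epsilon_{N+1}^2)$, with $g_N$ smooth and nonzero on a fixed interval $I$. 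Making $g_N$ nonvanishing is where the previous bumps (at least $b_1\ne0$) are used.

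(ii) Log amplification. Applying the one-sided log operator to $\epsilon e^{2\ii X\xi}g(\xi)$ and integrating by parts shows that the output is $\epsilon\log X\cdot h(\xi)+O(\epsilon)$ with $h\not\equiv0$ on $I$. The mechanism is that the kernel $\log|\xi-s|$ against a frequency-$X$ oscillation produces $\log X$ times the local value from the endpoint $s=\xi$. The modulus change, by contrast, is only $O(\epsilon_{N+1})\to0$. Hence the phases of $V_{N+1}$ and $V_N$ differ by $\kappa h(\xi)+o(1)$ on $I$, where $|V_N|\gtrsim1$. This yields $\|V_{N+1}-V_N\|_{L^2}\ge c\kappa$ for small fixed $\kappa$.

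(iii) Closing. Choose $X_{N+1}$ inductively so large that all error terms, which depend on $q^{(N)}$, are negligible, and so that the times by which $q^{(N)}$ reaches its asymptotic regime are separated. The datum lies in all $H^k$ because the bumps have bounded derivatives and summable amplitudes, lies in $L^1$ by $\sum\epsilon_n<\infty$, and has $xq_*\notin L^2$ since $\sum\epsilon_n^2X_n^2=\infty$. The prescribed mass $M$ is achieved by scaling the first bump, keeping the rest as a small perturbation and adjusting continuously.

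The main obstacle is the passage from the separation of the $V_N$ to the nonexistence of $V$ for $q_*$. This needs a uniform-in-$N$ control of $\sup_t\|q(t)-q^{(N)}(t)\|_{L^2}$, i.e.\ $L^2$ Lipschitz stability globally in time. I would obtain this from the integrable structure: the scattering map is a bi-Lipschitz-type homeomorphism in the relevant weighted-free $L^2$ topology, and the flow is linear on scattering data. Alternatively, the paper's route may instead compute the profile of $q_*$ directly as the limit and show that its Cauchy property fails. I would attempt the first approach with the Ifrim--Tataru global bounds.
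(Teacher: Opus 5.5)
\paragraph{Verdict.} Your steps (i)--(ii) follow the paper's spectral mechanism: exact transfer-matrix composition for disjoint supports, and the translation phase on $b$. The one-sided logarithmic operator amplifies the first harmonic by $\log X$, so $\epsilon\log X=\kappa$ gives an order-one phase jump. An anchor keeps $\overline{b_q}/a_q$ away from zero on a fixed interval, and Riemann--Lebesgue gives the lower bound. The closing step, however, rests on something false.

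\paragraph{The gap: global-in-time stability fails.} The bound $\sup_t\|q(t)-\tilde q(t)\|_2\lesssim\|q(0)-\tilde q(0)\|_2$ fails for this equation, precisely because of the logarithmic phase. Take $q\in C_c^\infty$ and $\tilde q=(1+\delta)q$. Their profiles rotate by $e^{-\ii(\log t)\nu_q}$ and $e^{-\ii(\log t)\nu_{\tilde q}}$. The difference $\nu_{\tilde q}-\nu_q$ is real-analytic, of size $O(\delta)$, and not identically zero, since its integral is $((1+\delta)^2-1)\|q\|_2^2$. Averaging $\|e^{-\ii s\nu_q}V_q-e^{-\ii s\nu_{\tilde q}}V_{\tilde q}\|_2^2$ over $s=\log t$ gives $\|q\|_2^2+\|\tilde q\|_2^2$. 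Hence $\limsup_t\|q(t)-\tilde q(t)\|_2\ge\sqrt2\|q\|_2$, however small $\delta$ is.

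Linearity of the flow on $r$ does not rescue this. The distance between $r(t)$ and $\tilde r(t)$ is conserved, but the inverse spectral map is only a homeomorphism. It is not uniformly continuous along the noncompact orbits $r\mapsto e^{4\ii w^2t}r$.

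The same decoherence shows that closeness of $F_{q_*}(t)$ to $F_{q^{(N)}}(t)$ would not suffice anyway. You must compare corrected profiles. That requires knowing $|V|^2$ and controlling $(\log t)\|\nu_{q_*}-\nu_N\|_\infty$, which grows with $t$.

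\paragraph{What is missing: the online order of quantifiers.} This is the content of the paper's remark on why the construction must be adaptive.
\begin{enumerate}
\item After $q_n$ is fixed, pick $t_n$ with $\|e^{\ii(\log t_n)\nu_n}F_{q_n}(t_n)-V_n\|_2<2^{-n}$.
\item Use only \emph{fixed-time} $L^2$ continuity of the flow at $t_n$ (Tsutsumi) to get a radius $d_n$.
\item Only then fix a budget $B_n$ as in \eqref{eq:6.4}. The second condition there uses the $L^1$-Lipschitz bound \eqref{eq:3.9} on $\nu$ to pay for the factor $\log t_n$.
\item Require $\sum_{m>n}\epsilon_m\le B_n$ of all later bumps.
\end{enumerate}
Since $X_m$ is chosen last, $\epsilon_m=\kappa/\log X_m$ can meet these arbitrarily small budgets without weakening the profile jump. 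In particular, a preset choice such as $X_n=e^{n^2}$ does not work in general.

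This yields $\|e^{\ii(\log t_n)\nu_{q_*}}F_{q_*}(t_n)-V_n\|_2<3\cdot2^{-n}$. The paper then uses Proposition~\ref{prop:necessity} to force $|V|^2=\nu_{q_*}$. That proposition combines Ifrim--Tataru $L^6$ decay along a sequence, a Riccati expansion of $\log a$, and separation by Poisson kernels. With the modulus fixed, \eqref{eq:1.3} would make the corrected profile converge, contradicting $\|V_{n+1}-V_n\|_2\ge d_*$.

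Within this scheme you could also identify the modulus directly. Comparing moduli at the times $t_n$ gives $|V_n|\to|V|$ in $L^2$, hence $|V|^2=\nu_{q_*}$.

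\paragraph{Prescribed norm.} Obtaining $\|q_*\|_2=M$ by \emph{adjusting the first bump continuously} is unjustified. The adaptive construction does not depend continuously on the anchor. Use instead the scaling $q\mapsto\lambda q(\lambda^2t,\lambda x)$, which transports fixed profiles between the two solutions.
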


We begin by identifying a necessary modulus for any profile satisfying
\eqref{eq:1.3}.  This uses the regular-data scattering evolution recorded by
Bessonov and Denisov together with the $L^2$ approximation argument supplied
below; the half-line homeomorphism, full-line splice, and normalized outer
factor of Bessonov and Gubkin; and the global $L^6_{t,x}$ estimate of Ifrim
and Tataru \cite{BessonovDenisov,BessonovGubkin,IfrimTataru}.  For smooth
compactly supported data, the Deift--Zhou asymptotics are then converted into
strong Fourier-profile convergence while retaining the exact Fresnel
operator.  The instability enters through a far-right bump: transfer-matrix
composition creates a rapidly oscillating transmission term whose
logarithmic phase has order-one size at arbitrarily small $L^1\cap L^2$ cost.
An online gliding-hump construction finally makes one solution shadow the
resulting non-Cauchy sequence of smooth profiles.

Only continuity of the direct spectral map is used.  In particular, the
argument does not require continuity of the full inverse-scattering phase map
or a regularized-determinant representation of the transmission coefficient.
Denisov's $L^2$ Dirac/Krein examples provide a related mechanism: for a Dirac
example, the strong maximal function formed from the arguments of the
truncated coefficients $a$ is infinite on an interval.  Since $1/a$ is the
transmission coefficient in Denisov's convention, the corresponding maximal
transmission phase is likewise infinite, and the strong nonlinear Carleson
conjecture fails.  No theorem from that work is used below
\cite[Theorem~2.3 and Corollary~2.4]{DenisovNCC}.\\

\paragraph{\textbf{Conventions.}}
Unless indicated otherwise, all Lebesgue and Sobolev spaces are over $\R$,
and $\|f\|_p=\|f\|_{L^p(\R)}$.  The Fourier transform has the unitary
normalization \eqref{eq:1.2}, and $\F^{-1}$ denotes its inverse.  Constants
denoted by $C$ may change from line to line; $A\lesssim B$ means $A\le CB$.
We write $\Sch(\R)$ for the Schwartz class.  Every unqualified long-time
limit is taken as $t\to+\infty$.  If $f$ is an initial datum, $F_f$ denotes
the Fourier profile of the solution issued from $f$.

\section{Spectral normalization and long-time inputs}

\subsection{Spectral normalization and direct
\texorpdfstring{$L^2$}{L2} continuity}

Set $\R_+=[0,\infty)$ and $\C_+=\{z\in\C:\Im z>0\}$.  We write $I_2$ for
the $2\times2$ identity matrix and $H^\infty(\C_+)$ for the bounded Hardy
space on $\C_+$.  If $h$ is holomorphic on $\C_+$, then
$h(\lambda+\ii0)$ denotes its nontangential boundary value, while
$h(\ii\infty)=\lim_{y\to\infty}h(\ii y)$ whenever this limit exists.

We first reconcile the three spectral conventions used below.  Write
$\sigma_3=\operatorname{diag}(1,-1)$ and
$\sigma_1=\bigl(\begin{smallmatrix}0&1\\1&0\end{smallmatrix}\bigr)$.
For $q\in\Sch(\R)$ and $\zeta\in\R$, the Bessonov--Denisov Jost matrices
$T_{\pm,q}^{\rm BD}(x,\zeta)$ are the unique matrix solutions of
\begin{equation*}
 \partial_xT_{\pm,q}^{\rm BD}(x,\zeta)
 =\begin{pmatrix}
   -\ii\zeta/2&\overline{q(x)}\\ q(x)&\ii\zeta/2
  \end{pmatrix}T_{\pm,q}^{\rm BD}(x,\zeta)
\end{equation*}
with the Jost normalizations
\begin{equation*}
 E_{\rm BD}(x,\zeta)
 =e^{-\ii\zeta x\sigma_3/2}
 =\begin{pmatrix}e^{-\ii\zeta x/2}&0\\0&e^{\ii\zeta x/2}\end{pmatrix},
 \qquad
 T_{\pm,q}^{\rm BD}(x,\zeta)-E_{\rm BD}(x,\zeta)\longrightarrow0
 \quad(x\to\pm\infty).
\end{equation*}
This is exactly the normalization in
\cite[(2-1)--(2-7)]{BessonovDenisov}.  The reduced transition matrix is
defined by
\begin{align*}
 T_{-,q}^{\rm BD}(x,\zeta)
 &=T_{+,q}^{\rm BD}(x,\zeta)T_q^{\rm BD}(\zeta),\\
 T_q^{\rm BD}
 &=\begin{pmatrix}a_q^{\rm BD}&\overline{b_q^{\rm BD}}\\
                   b_q^{\rm BD}&\overline{a_q^{\rm BD}}\end{pmatrix},
 & |a_q^{\rm BD}|^2-|b_q^{\rm BD}|^2&=1,
 & r_q^{\rm BD}&=b_q^{\rm BD}/a_q^{\rm BD}.
\end{align*}
All entries in this display are evaluated at $\zeta\in\R$.  The $L^2$
boundary coefficients used later are understood through the direct spectral
extension described below.  Only $a_q^{\rm BD}$ is continued off the real
axis: it has the standard zero-free analytic continuation to $\C_+$,
normalized by $a_q^{\rm BD}(\ii y)\to1$ as $y\to\infty$.  For the Wall
variable $w\in\R$, set
\begin{equation}
 S_{\pm,q}^{\rm W}(x,w)
 =\sigma_1T_{\pm,q}^{\rm BD}(x,-2w)\sigma_1.            \label{eq:2.0}
\end{equation}
Direct substitution gives
\begin{equation}
 \partial_xS_{\pm,q}^{\rm W}
 =\begin{pmatrix}-\ii w&q\\ \overline q&\ii w\end{pmatrix}S_{\pm,q}^{\rm W},
 \qquad S_{-,q}^{\rm W}=S_{+,q}^{\rm W}T_q^{\rm W},
 \qquad T_q^{\rm W}(w)=\sigma_1T_q^{\rm BD}(-2w)\sigma_1,
 \quad w\in\R.                                         \label{eq:2.0a}
\end{equation}
On the real axis write
\begin{equation*}
 T_q^{\rm W}
 =\begin{pmatrix}a_q^{\rm W}&\overline{b_q^{\rm W}}\\
                  b_q^{\rm W}&\overline{a_q^{\rm W}}\end{pmatrix},
 \qquad r_q^{\rm W}=b_q^{\rm W}/a_q^{\rm W}.
\end{equation*}
Thus
\begin{equation*}
 a_q^{\rm W}(w)=\overline{a_q^{\rm BD}(-2w)}\quad(w\in\R),
 \qquad
 a_q^{\rm W}(w)=\overline{a_q^{\rm BD}(-2\overline w)}
 \quad(w\in\C_+)
\end{equation*}
defines its analytic continuation.  It is zero-free and outer-normalized by
$a_q^{\rm W}(\ii y)\to1$ as $y\to\infty$.

In the Deift--Zhou convention \eqref{eq:2.8}, the free matrix is
$\exp(\ii xz\sigma_3/2)$, so $z=-2w$.  Moreover, Deift--Zhou use
$\psi^{(+)}=\psi^{(-)}A_q^{\rm DZ}$, whereas \eqref{eq:2.0a} uses
$S_{-,q}^{\rm W}=S_{+,q}^{\rm W}T_q^{\rm W}$.  The normalized Jost solutions are
therefore identical at $w=-z/2$ and
\begin{equation}
 A_q^{\rm DZ}(z)=\bigl(T_q^{\rm W}(-z/2)\bigr)^{-1}.
                                                               \label{eq:2.0b}
\end{equation}
Here $A_q^{\rm DZ}$ is the Deift--Zhou connection matrix specified in
\eqref{eq:2.9}, and $a_q^{\rm DZ}$ below denotes its $(1,1)$-entry; moreover,
$SU(1,1)=\{M\in SL(2,\C):M^*\operatorname{diag}(1,-1)M
=\operatorname{diag}(1,-1)\}$.  Since these matrices lie in $SU(1,1)$,
we obtain the boundary identity
\begin{equation}
 a_q^{\rm W}\!\left(-\frac\xi2+\ii0\right)
 =\overline{a_q^{\rm DZ}(\xi+\ii0)},
 \qquad
 \left|a_q^{\rm W}\!\left(-\frac\xi2+\ii0\right)\right|
 =|a_q^{\rm DZ}(\xi+\ii0)|.                            \label{eq:2.0c}
\end{equation}
Only this transmission identity is used; no pointwise identity between the
two reflection coefficients is asserted.

We now state the $L^2$ spectral input precisely.  Given
$v\in L^2(\R_+)$, let
\begin{equation*}
 D_vY=\mathsf JY'+Q_vY,\qquad
 \mathsf J=\begin{pmatrix}0&-1\\1&0\end{pmatrix},\qquad
 Q_v=\begin{pmatrix}\Im v&\Re v\\ \Re v&-\Im v\end{pmatrix},
\end{equation*}
on vectors $Y\in L^2(\R_+;\C^2)$ that are locally absolutely continuous,
with $D_vY\in L^2(\R_+;\C^2)$
and boundary condition $Y_2(0)=0$.  Let
$N_v=(n_{jk})_{j,k=1}^2$ be the fundamental matrix satisfying
\begin{equation*}
 \mathsf JN_v'(x,w)+Q_v(x)N_v(x,w)=wN_v(x,w),
 \qquad N_v(0,w)=I_2.
\end{equation*}
The Weyl function is
$m_v(w)=\lim_{x\to\infty}n_{22}(x,w)/n_{21}(x,w)$ for $w\in\C_+$.
Define the Schur function $f_v$ by
$m_v=\ii(1+f_v)/(1-f_v)$.  Let
\begin{equation*}
 \mathcal S_2(\C_+)=\left\{f\in H^\infty(\C_+): |f|\le1,\ 
       \log(1-|f|^2)\in L^1(\R)\right\}
\end{equation*}.
In expressions over $\R$, each $H^\infty(\C_+)$ function is identified
with its almost-everywhere nontangential boundary representative.
Equip $\mathcal S_2(\C_+)$ with the metric
\begin{equation*}
 d_{\mathcal S_2}(f,g)^2
 =\int_\R-\log\left(1-
   \left|\frac{f-g}{1-\overline f g}\right|^2\right)\dd x.
\end{equation*}
The Sylvester--Winebrenner theorem in the form
\cite[Theorem~1.1]{BessonovGubkin} states that
$v\mapsto f_v$ is a homeomorphism from $L^2(\R_+)$ onto
$\mathcal S_2(\C_+)$ and that
\begin{equation}
 \int_{\R_+}|v|^2\dd x
 =\frac1\pi\int_\R-\log(1-|f_v|^2)\dd x.              \label{eq:2.0d}
\end{equation}
For $q\in L^2(\R)$, take
$q^+(x)=-\overline{q(x)}$ and $q^-(x)=q(-x)$ on $\R_+$, and let
$f^\pm$ be their Schur functions.  The uniquely normalized outer factors
are
\begin{equation*}
 a^\pm(w)=\exp\left\{-\frac1{2\pi\ii}\int_\R
       \frac{\log(1-|f^\pm(\lambda)|^2)}{\lambda-w}\dd\lambda\right\},
 \qquad a^\pm(\ii\infty)=1,
\end{equation*}
and $b^\pm_{\rm BG}=a^\pm f^\pm$.  Define the BG full-line coefficients by
\begin{equation*}
 a_{\rm BG}=a^+a^--b^+_{\rm BG}b^-_{\rm BG},\qquad
 b_{\rm BG}=a^-\overline{b^+_{\rm BG}}
             -b^-_{\rm BG}\overline{a^+},\qquad
 r_q^{\rm BG}=b_{\rm BG}/a_{\rm BG}.
\end{equation*}
The outer normalization is \cite[(4.10)]{BessonovGubkin}, and the splice is
\cite[(4.11)]{BessonovGubkin}; the conjugated factors in $b_{\rm BG}$ are
real-boundary values.

There are two convention changes in comparing this splice with the BD
transition matrix.  First,
\begin{equation*}
 Q_v^{\rm BG}
 =\begin{pmatrix}\Im v&\Re v\\ \Re v&-\Im v\end{pmatrix}
 =Q_{-v}^{\rm BD},\qquad
 Q_u^{\rm BD}=
 \begin{pmatrix}-\Im u&-\Re u\\-\Re u&\Im u\end{pmatrix}.
\end{equation*}
Thus $q^+=-\overline q$ and $q^-=q(-\,\cdot)$ are precisely the two
half-line systems in Proposition 2.8, especially (2-19)--(2-22), of
\cite{BessonovDenisov} for the full BD potential $\overline q$.

Second, the BG Schur function differs by a sign from the BD Wall quotient.
For either half-line system, let $m$ be its Weyl function and $s_{\rm BD}$
its Bessonov--Denisov Wall quotient.  The BD Weyl relation and the BG Cayley
transform give
\begin{equation*}
 m=\ii\frac{1-s_{\rm BD}}{1+s_{\rm BD}},\qquad
 s_{\rm BD}=\frac{1+\ii m}{1-\ii m},\qquad
 f_{\rm BG}=\frac{m-\ii}{m+\ii}=-s_{\rm BD}.
\end{equation*}
Consequently, with $\sigma_3=\operatorname{diag}(1,-1)$,
\begin{equation*}
 T_q^{\rm BG}(w)
 :=\begin{pmatrix}a_{\rm BG}(w)&\overline{b_{\rm BG}(w)}\\
                   b_{\rm BG}(w)&\overline{a_{\rm BG}(w)}
   \end{pmatrix}
 =\sigma_3T_{\overline q}^{\rm BD}(2w)\sigma_3
 =\sigma_3T_q^{\rm W}(w)\sigma_3,
 \qquad w\in\R.
\end{equation*}
In particular,
\begin{align*}
 a_{\rm BG}(w)&=a_{\overline q}^{\rm BD}(2w)
      =\overline{a_q^{\rm BD}(-2\overline w)}
      =a_q^{\rm W}(w), &&w\in\C_+,\\
 r_q^{\rm BG}(w)&=-r_{\overline q}^{\rm BD}(2w)
      =-\overline{r_q^{\rm BD}(-2w)}
      =-r_q^{\rm W}(w), &&w\in\R.
\end{align*}
In particular, $a_q^{\rm W}=a_{\rm BG}$ is outer and
\begin{equation}
 1-|r_q^{\rm W}|^2=1-|r_q^{\rm BG}|^2=|a_q^{\rm W}|^{-2},
 \qquad
 a_q^{\rm W}(w)=\exp\left\{-\frac1{2\pi\ii}\int_\R
 \frac{\log(1-|r_q^{\rm W}(\lambda)|^2)}{\lambda-w}\dd\lambda\right\}.
                                                               \label{eq:2.0e}
\end{equation}

\begin{lemma}[Direct spectral continuity and conservation]
\label{lem:spectral-continuity}
If $q_j\to q$ in $L^2(\R)$, then for every $w\in\C_+$,
\begin{equation}
 \log a_{q_j}^{\rm W}(w)\longrightarrow\log a_q^{\rm W}(w),  \label{eq:2.0f}
\end{equation}
where the logarithm is fixed by the normalization at $\ii\infty$.
For the global $L^2$ solution of \eqref{eq:1.1},
\begin{equation}
 a_{q(t)}^{\rm W}(w)=a_{q(0)}^{\rm W}(w),
 \qquad w\in\C_+,\quad t\in\R.                     \label{eq:2.0g}
\end{equation}
Moreover,
\begin{equation}
 \|q\|_2^2=\frac1\pi\int_\R
  -\log(1-|r_q^{\rm W}(\lambda)|^2)\dd\lambda
 =\frac2\pi\int_\R\log|a_q^{\rm W}(\lambda+\ii0)|\dd\lambda.
                                                               \label{eq:2.0h}
\end{equation}
\end{lemma}

\begin{proof}
Let $r_{q_j}^{\rm BG}$ and $r_q^{\rm BG}$ denote the coefficients in
Proposition 4.11 of \cite{BessonovGubkin}, and set
$r_{q_j}^{\rm W}=-r_{q_j}^{\rm BG}$ and
$r_q^{\rm W}=-r_q^{\rm BG}$.  The metric used in Proposition 4.11 is
invariant under multiplication by $-1$.  That proposition and Lemma 4.4
therefore give
convergence of $r_{q_j}^{\rm W}$ in measure and convergence of the entropy
masses.  Thus, with
\begin{equation*}
 g_j=-\log(1-|r_{q_j}^{\rm W}|^2),\qquad
 g=-\log(1-|r_q^{\rm W}|^2),
\end{equation*}
we have $\int g_j\to\int g$.

The function $z\mapsto-\log(1-|z|^2)$ is continuous on the open unit
disk, and the reflection coefficients take values there almost everywhere.
Let $(j_k)$ be an arbitrary subsequence.  It has a further subsequence
$(j_{k_\ell})$ along which
$r_{q_{j_{k_\ell}}}^{\rm W}\to r_q^{\rm W}$ almost everywhere, and hence
$g_{j_{k_\ell}}\to g$ almost everywhere.  Since these functions are
nonnegative and their integrals converge, Scheff\'e's lemma gives
$\|g_{j_{k_\ell}}-g\|_1\to0$.  Every subsequence has such a further
subsequence, so the subsequence criterion yields $\|g_j-g\|_1\to0$ for
the full sequence.  Formula \eqref{eq:2.0e} now yields,
for $\Im w=y>0$,
\begin{equation*}
 |\log a_{q_j}^{\rm W}(w)-\log a_q^{\rm W}(w)|
 \le\frac1{2\pi y}\|g_j-g\|_1,
\end{equation*}
which proves \eqref{eq:2.0f}.

For Schwartz solutions, Theorem 2.3 and (2-9) of
\cite{BessonovDenisov} give
$r_q^{\rm BD}(\zeta,t)=e^{-\ii\zeta^2t}r_q^{\rm BD}(\zeta,0)$.
The conversion above gives
$r_q^{\rm W}(w)=\overline{r_q^{\rm BD}(-2w)}$, and hence
\begin{equation*}
 r_q^{\rm W}(w,t)=e^{4\ii w^2t}r_q^{\rm W}(w,0)
 \quad\text{for real }w.
\end{equation*}
Thus $g_q:=-\log(1-|r_q^{\rm W}|^2)$ is conserved, and the normalized
outer formula gives
\eqref{eq:2.0g} for Schwartz data.  Approximate an arbitrary $L^2$ datum
by Schwartz functions, use fixed-time continuity of the $L^2$ flow
\cite[Theorem~1.1 and (1.6)]{Tsutsumi}, and pass to the limit using
\eqref{eq:2.0f}.  Finally, \eqref{eq:2.0h} is
\cite[Proposition~4.10]{BessonovGubkin} and \eqref{eq:2.0e}.
\end{proof}

\subsection{The necessary modulus of a fixed profile}

With the unitary Fourier convention \eqref{eq:1.2}, define
\begin{equation}
 \rho_q(\xi)=\frac1\pi
 \log\left|a_q^{\rm W}\!\left(-\frac\xi2+\ii0\right)\right|.
                                                               \label{eq:2.1}
\end{equation}
If $a_q^{\rm DZ}$ denotes the coefficient in \eqref{eq:2.8}--
\eqref{eq:2.9}, identity \eqref{eq:2.0c} gives
\begin{equation}
 \left|a_q^{\rm W}\!\left(-\frac\xi2+\ii0\right)\right|
 =|a_q^{\rm DZ}(\xi)|,
 \qquad \rho_q(\xi)=\frac1\pi\log|a_q^{\rm DZ}(\xi)|. \label{eq:2.1a}
\end{equation}
The trace formula \eqref{eq:2.0h}, followed by $\xi=-2\lambda$, is
\begin{equation}
 \rho_q\ge0,\qquad
 \int_{\R}\rho_q(\xi)\dd\xi=\|q\|_2^2.               \label{eq:2.2}
\end{equation}
For every $q\in L^2(\R)$ we henceforth set
\begin{equation*}
 \nu_q:=\rho_q.
\end{equation*}
For regular data this agrees with the Deift--Zhou quantity in
\eqref{eq:2.10} below.

\begin{proposition}[Transmission-density necessity]
\label{prop:necessity}
If \eqref{eq:1.3} holds for an $L^2$ solution, then
\begin{equation}
 |V(\xi)|^2=\rho_{q_*}(\xi)\quad\text{for a.e. }\xi.  \label{eq:2.3}
\end{equation}
Consequently \eqref{eq:1.3} would imply
\begin{equation}
 Z_{q_*}(t):=e^{\ii(\log t)\rho_{q_*}}F_{q_*}(t)
 \longrightarrow V\quad\text{strongly in }L^2.        \label{eq:2.4}
\end{equation}
\end{proposition}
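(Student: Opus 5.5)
The plan is to use the conserved transmission coefficient $a^{\rm W}_{q(t)}$ of Lemma~\ref{lem:spectral-continuity} as a bridge. By \eqref{eq:2.0g}, for each fixed $w\in\C_+$ the value $\log a^{\rm W}_{q(t)}(w)=\log a^{\rm W}_{q_*}(w)$ does not depend on $t$. On the other hand, \eqref{eq:1.3} says that $q(t)$ is $o_{L^2}(1)$-close to the modified free wave
\[
u(t):=e^{\ii t\partial_x^2}\F^{-1}\bigl(e^{-\ii(\log t)|V|^2}V\bigr).
\]
So I will compute $\lim_{t\to\infty}\log a^{\rm W}_{u(t)}(w)$ directly. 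I expect it to equal the normalized outer integral built from the density $\pi|V(-2\lambda)|^2$. Transferring this limit to $q(t)$ then identifies $\rho_{q_*}$ with $|V|^2$.

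\emph{Step 1 (a quantitative direct estimate).} The continuity statement \eqref{eq:2.0f} is only sequential along a fixed limit, but $u(t)$ does not converge in $L^2$. So I first need a locally uniform Lipschitz-type bound: for fixed $w\in\C_+$ and $R>0$,
\[
|\log a^{\rm W}_p(w)-\log a^{\rm W}_q(w)|\le C(w,R)\|p-q\|_2,\qquad \|p\|_2,\|q\|_2\le R.
\]
For $\Im w>0$ the Jost solutions decay exponentially. I would try to get this bound from the Volterra series for the Zakharov--Shabat system with the exponential weight $e^{-\Im w|x-y|}$, using Cauchy--Schwarz in each kernel factor. If the bound holds only on bounded sets, that suffices, because $\|q(t)\|_2=\|q_*\|_2$ and $\|u(t)\|_2=\|V\|_2$. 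I expect this step to be the main obstacle. If the series does not converge for large $R$, the fallback is to argue by contradiction along a sequence $t_k\to\infty$ and pass to subsequences in which the half-line Schur functions converge in the BG metric, using the homeomorphism of \cite[Theorem~1.1]{BessonovGubkin}.

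\emph{Step 2 (Born limit for the free wave).} Given Step 1, the same bound lets me replace $V$ by a smooth, compactly supported $V_\varepsilon$ with $\|V-V_\varepsilon\|_2<\varepsilon$, uniformly in $t$. For such profiles $u(t)$ disperses: $\|u(t)\|_\infty\lesssim t^{-1/2}$ while $\|u(t)\|_2$ stays fixed. I then expand $\log a^{\rm W}_{u(t)}(w)$ in its Born series. The quadratic term is an explicit bilinear integral in $\widehat{u(t)}$. Its modulus depends only on $|\widehat{u(t)}|^2=|V_\varepsilon|^2$, since the phases $e^{-\ii t\xi^2}$ and $e^{-\ii(\log t)|V_\varepsilon|^2}$ are unimodular. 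After the change of variables $\xi=-2\lambda$ consistent with \eqref{eq:2.1}, its limit is
\[
-\frac1{2\pi\ii}\int_\R\frac{-2\pi|V_\varepsilon(-2\lambda)|^2}{\lambda-w}\dd\lambda .
\]
The higher-order terms I would bound by powers of $\|u(t)\|_4$ or $\|u(t)\|_\infty$, and these tend to $0$. Letting $\varepsilon\to0$ gives the limit for $u(t)$.

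\emph{Step 3 (identification and the corollary).} Steps 1 and 2 show that for every $w\in\C_+$ the value $\log a^{\rm W}_{q_*}(w)$ equals the Cauchy integral of the density $\pi|V(-2\lambda)|^2$. From \eqref{eq:2.0e} it also equals the Cauchy integral of $\pi\rho_{q_*}(-2\lambda)$, using the definition \eqref{eq:2.1}. Both densities are nonnegative and integrable; for $|V|^2$ this is because $V\in L^2$, and for $\rho_{q_*}$ it is \eqref{eq:2.2}. Their Poisson integrals, which are the real parts of these logarithms, therefore agree on all of $\C_+$. Uniqueness of the Poisson representation of $L^1$ boundary data then gives \eqref{eq:2.3}. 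Finally, multiplication by $e^{\ii(\log t)\rho_{q_*}}$ is unitary on $L^2$, so
\[
\|Z_{q_*}(t)-V\|_2=\bigl\|F_{q_*}(t)-e^{-\ii(\log t)|V|^2}V\bigr\|_2\to0 ,
\]
which is \eqref{eq:2.4}.
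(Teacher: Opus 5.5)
Your architecture is sound: conservation of $a^{\rm W}$, a Born limit, Poisson uniqueness, then dephasing, and Step 3 is fine. The difficulty is that your comparison wave $u(t)$, like $q(t)$, has no $L^2$ limit. Every transfer therefore has to be uniform in $t$, and neither place where you need this is secured.

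\textbf{Step 1.} The Lipschitz bound is left open, and the fallback cannot work. A dispersing family (fixed norm, tending weakly to zero) has no $L^2$-convergent subsequence. Because the Bessonov--Gubkin map is a homeomorphism, the associated Schur functions then have no $d_{\mathcal S_2}$-convergent subsequence either; that theorem gives continuity at a limit point, not uniform continuity on balls. The estimate is nevertheless true, and the Volterra route does give it. Bounding each kernel factor separately, however, yields only a geometric series in $\|q\|_2^2/(2\Im w)$. What you need is the ordering: in the $n$th iterate for the first Jost component the variables satisfy $x>s_1>r_1>\dots>s_n>r_n$, with integrand $\prod_j q(s_j)e^{2\ii w(s_j-r_j)}\overline{q(r_j)}$. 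Dropping the constraints $r_j>s_{j+1}$ and keeping only $s_1>\dots>s_n$ gives the bound $(\|q\|_2^2/2\Im w)^n/n!$ for every $q\in L^2$. Treating $q,\overline q$ as independent variables, analyticity and Cauchy estimates then give $|a^{\rm W}_p(w)-a^{\rm W}_q(w)|\le C(R,w)\|p-q\|_2$ on balls. Since $|a^{\rm W}|\ge1$ on $\C_+$, this passes to $\log a^{\rm W}$. Alternatively, restrict to $\Im w\ge CR^2$, where a naive contraction suffices; Poisson integrals along one horizontal line already separate finite measures.

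\textbf{Step 2.} The smoothing fails as written, and it matters here that $V$ is an arbitrary $L^2$ function. Suppose $u_\varepsilon(t)$ carries its own phase $e^{-\ii(\log t)|V_\varepsilon|^2}$. Then $\|u(t)-u_\varepsilon(t)\|_2$ is not $O(\varepsilon)$ uniformly in $t$: averaging over $\log t$ gives $\limsup_{t\to\infty}\|u(t)-u_\varepsilon(t)\|_2\ge(2\int_{\{|V|\ne|V_\varepsilon|\}}|V_\varepsilon|^2)^{1/2}-\varepsilon$. If it carries $e^{-\ii(\log t)|V|^2}$ instead, the profile is as rough as $V$, and $\|u_\varepsilon(t)\|_\infty\lesssim t^{-1/2}$ is unavailable.

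A repair is to freeze the phase on windows $t\in[T,(1+\delta)T]$. Put $g_T=\F^{-1}(e^{-\ii(\log T)|V|^2}V)$. Replacing $u(t)$ by $e^{\ii t\partial_x^2}g_T$ costs at most $\|\min(2,\delta|V|^2)V\|_2$ in $L^2$, uniformly in $T$. The linear Strichartz bound $\int_T^{(1+\delta)T}\|e^{\ii t\partial_x^2}g_T\|_6^6\dd t\lesssim\|V\|_2^6$ then produces times at which the frozen wave is $L^6$-small. At those times an $L^6$ Riccati expansion like \eqref{eq:2.5} applies.

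The paper avoids both issues. It uses the Ifrim--Tataru bound $\int_0^\infty\|q(t)\|_6^6\dd t<\infty$ to choose times $t_j$ at which the true solution is dispersed, and applies \eqref{eq:2.5} directly to $q(t_j)$. From \eqref{eq:1.3} it then needs only $\||F_q(t_j)|^2-|V|^2\|_1\to0$. No continuity of $\log a^{\rm W}$ between two non-convergent families, and no regularity of $V$, is ever required. Your repaired route would trade that nonlinear spacetime input for linear Strichartz plus the Lipschitz bound.
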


\begin{proof}
Apply \cite[Theorem~2 and (1.12)]{IfrimTataru} to
$u=\sqrt2\,q$.  The resulting global spacetime estimate gives
$\int_0^\infty\|q(t)\|_6^6\dd t<\infty$; hence there are
$t_j\to\infty$, chosen among times for which $q(t_j)\in L^6$, with
$\|q(t_j)\|_6\to0$.  For every fixed
$w=\alpha+\ii y\in\C_+$, put
\begin{equation*}
 P_w(\xi)=\frac{2y}{(2\alpha+\xi)^2+4y^2},
 \qquad c_y=\left(\frac5{12y}\right)^{5/6}.
\end{equation*}
The Riccati expansion in the Wall convention gives
\begin{equation}
 \left|\log|a_q^{\rm W}(w)|-
 \int_{\R}P_w(\xi)|\widehat q(\xi)|^2\dd\xi\right|
 \le \frac{2c_y^2}{y}\|q\|_2^2\|q\|_6^2,
                                                               \label{eq:2.5}
\end{equation}
once $\|q\|_6$ is sufficiently small.  To verify this estimate, define
\begin{equation*}
 (K_wf)(x)=\int_{-\infty}^x e^{2\ii w(x-s)}f(s)\dd s,
\end{equation*}
let $\mathfrak m_{q,w}$ denote the unique small fixed point of
\begin{equation*}
 \mathfrak m_{q,w}=K_w(\overline q-q\mathfrak m_{q,w}^2).
\end{equation*}
It is the Jost quotient, and
$\log a_q^{\rm W}(w)=\int q\mathfrak m_{q,w}$.  Young's inequality gives
\begin{equation*}
 \|K_wf\|_\infty\le c_y\|f\|_6,
 \qquad
 \|K_wf\|_2\le(2y)^{-1}\|f\|_2.
\end{equation*}
If $c_y\|q\|_6\le1/4$, contraction on
\(\|\mathfrak m_{q,w}\|_\infty\le2c_y\|q\|_6\) yields
\begin{equation*}
 \|\mathfrak m_{q,w}-K_w\overline q\|_2
 \le \frac{2c_y^2}{y}\|q\|_2\|q\|_6^2.
\end{equation*}
Taking the real part of $\int qK_w\overline q$ and using Plancherel gives
precisely
\begin{equation*}
 \Re\int_\R qK_w\overline q\dd x
 =\int_\R\frac{2y}{(2\alpha+\xi)^2+4y^2}
       |\widehat q(\xi)|^2\dd\xi.
\end{equation*}
For $q\in C_c^\infty$, the quotient $\mathfrak m_{q,w}$ is the Jost quotient in
\eqref{eq:2.0a}, and integration of the first component gives
$\log a_q^{\rm W}(w)=\int q\mathfrak m_{q,w}$.  For general
$q\in L^2\cap L^6$, choose
$q_n\in C_c^\infty$ converging to $q$ in both spaces, with the $L^6$
smallness uniform.  The same contraction argument gives
\begin{align*}
 \|\mathfrak m_{q_n,w}-\mathfrak m_{q,w}\|_\infty
   &\le C_{w,q}\|q_n-q\|_6,\\
 \|\mathfrak m_{q_n,w}-\mathfrak m_{q,w}\|_2
   &\le C_{w,q}
       (\|q_n-q\|_2+\|q_n-q\|_6),
\end{align*}
and therefore
$\int q_n\mathfrak m_{q_n,w}\to\int q\mathfrak m_{q,w}$.  On the other hand,
Lemma~\ref{lem:spectral-continuity} gives
$\log a_{q_n}^{\rm W}(w)\to\log a_q^{\rm W}(w)$.  This proves
\eqref{eq:2.5} for $L^2\cap L^6$ data and also identifies the Riccati
quantity with the normalized spectral coefficient.  The zero-free outer
normalization fixes the logarithm.

Lemma~\ref{lem:spectral-continuity}, the outer formula, and the change of
variables $\xi=-2\lambda$ give
\begin{equation}
 \log|a_{q_*}^{\rm W}(w)|
 =\int_{\R}P_w(\xi)\rho_{q_*}(\xi)\dd\xi.             \label{eq:2.6}
\end{equation}
On the other hand, \eqref{eq:1.3} implies
\begin{equation}
 \big\||F_q(t_j)|^2-|V|^2\big\|_1\longrightarrow0.     \label{eq:2.7}
\end{equation}
Indeed, \eqref{eq:1.3} first gives $\|V\|_2=\|q_*\|_2$, and the left
side of \eqref{eq:2.7} is at most
\begin{equation*}
 (\|F_q(t_j)\|_2+\|V\|_2)
 \|F_q(t_j)-e^{-\ii(\log t_j)|V|^2}V\|_2.
\end{equation*}
Since $|F_q|=|\widehat q|$, equations \eqref{eq:2.5}--\eqref{eq:2.7} show that
$|V|^2\dd\xi$ and $\rho_{q_*}\dd\xi$ have the same Poisson transform.
For $s>0$, write
\begin{equation*}
 \mathsf P_s(x)=\frac1\pi\frac{s}{x^2+s^2}.
\end{equation*}
Then
\begin{equation*}
 P_{\alpha+\ii y}(\xi)=\pi\mathsf P_{2y}(\xi+2\alpha),\qquad
 \widehat{P_{\alpha+\ii y}}(\theta)
 =\sqrt{\frac\pi2}\,e^{-2y|\theta|}e^{2\ii\alpha\theta}
\end{equation*}
in the unitary Fourier convention.  These multipliers never vanish, so
if a finite signed measure has zero convolution with every such kernel,
Fourier transformation forces its Fourier--Stieltjes transform, and hence
the measure itself, to vanish.  Thus the kernels $P_w$ separate finite
measures.  Hence \eqref{eq:2.3}
follows.  Equation \eqref{eq:2.4}
is then exactly \eqref{eq:1.3} after dephasing.
\end{proof}

\subsection{Corrected strong profiles for smooth data}

We now use the precise Deift--Zhou convention
\begin{equation}
 \psi_x=\left(\ii z\sigma+
  \begin{pmatrix}0&q\\ \overline q&0\end{pmatrix}\right)\psi,
 \qquad \sigma=\frac12\begin{pmatrix}1&0\\0&-1\end{pmatrix}.  \label{eq:2.8}
\end{equation}
In this subsection, ``regular'' means $q\in C_c^\infty(\R)$.  For real
$z$, let $\psi^{(\pm)}$ be the Jost matrices normalized by
$\psi^{(\pm)}(x,z)e^{-\ii xz\sigma}\to I_2$ as $x\to\pm\infty$.
Write $A_q=A_q^{\rm DZ}$ and
$a_q=a_q^{\rm DZ}$ from now on, and set
\begin{equation}
 \psi^{(+)}=\psi^{(-)}A_q,\qquad
 A_q=\begin{pmatrix}a_q&\overline{b_q}\\
                     b_q&\overline{a_q}\end{pmatrix},
 \qquad
 r_q=-\frac{\overline{b_q}}{\overline{a_q}}.           \label{eq:2.9}
\end{equation}
The same Volterra construction defines these Jost and scattering quantities
for every $q\in L^1(\R)$; this extension is used only for the notation
$r_{q_*}^{\rm DZ}$ in the final discussion.
Thus
\begin{equation}
 L_q=\log(1-|r_q|^2)=-2\log|a_q|,\qquad
 -\frac{L_q}{2\pi}=\nu_q.                              \label{eq:2.10}
\end{equation}
Equations \eqref{eq:2.1a} and \eqref{eq:2.10} give exactly
$\rho_q=\nu_q$; the Deift--Zhou variable $z$ is the Fourier variable
$\xi=x/(2t)$, with no further reflection or dilation.  The classical
profile therefore has $|V_q|^2=\nu_q$.
Here and below $\Gamma$ denotes the Euler Gamma function.  We use the
normalized Gamma phase
\begin{equation}
 \mathcal G(s)=
 \begin{cases}
  \Gamma(\ii s)/|\Gamma(\ii s)|,&s>0,\\
  -\ii,&s=0,
 \end{cases}                                             \label{eq:2.10a}
\end{equation}
which extends smoothly to $s=0$.

\begin{lemma}[Smooth physical asymptotics imply corrected strong $L^2$
asymptotics]
\label{lem:smoothstrong}
For every $q\in C_c^\infty(\R)$ there is a profile $V_q$ associated with
this regular datum such that
\begin{equation}
 \left\|e^{\ii(\log t)\nu_q}F_q(t)-V_q\right\|_2
 \longrightarrow0.                                    \label{eq:2.11}
\end{equation}
Its modulus is $|V_q|^2=\nu_q$, and on the set
$\{z:r_q(z)\ne0\}=\{z:\nu_q(z)>0\}$ its phase is
\begin{equation}
 \arg V_q(z)=\cT L_q(z)+\arg\Gamma(\ii\nu_q(z))
   +\arg r_q(z)-\nu_q(z)\log2+\frac\pi2
   \pmod{2\pi},                                        \label{eq:2.12}
\end{equation}
where, for $h\in\Sch(\R)$,
\begin{equation}
 (\cT h)(z)=\frac1\pi\int_{-\infty}^{z}
            \log(z-s)h'(s)\dd s.                      \label{eq:2.13}
\end{equation}
At points where $\nu_q=0$, one has $V_q=0$, so no pointwise argument is
assigned; the globally defined profile is given by the branch-free formula
used below.
The constant $\pi/2$ corresponds to the principal square root in
\eqref{eq:2.14} and the phase convention in
\cite[(1.2)]{DeiftZhou}.
\end{lemma}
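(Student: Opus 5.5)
The plan is to start from the Deift--Zhou ray asymptotics \cite[Theorem~1.1]{DeiftZhou} for $q\in C_c^\infty\subset H^{1,1}$. Writing $z=x/(2t)$, they read
\[
 q(t,x)=t^{-1/2}\alpha(z)\,e^{\ii x^2/(4t)-\ii\nu_q(z)\log t}+O(t^{-1/2-\kappa}),
\]
uniformly in $x$, for any $0<\kappa<1/4$. The amplitude $\alpha$ has modulus $|\alpha|^2=\nu_q/2$ in the Deift--Zhou normalization. Its phase is the explicit combination of $\cT L_q$, $\arg\Gamma(\ii\nu_q)$, $\arg r_q$ and $-\nu_q\log 2$ appearing in \eqref{eq:2.12}.

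Since $r_q$ is Schwartz and $\nu_q$ is smooth and rapidly decaying, I will define the branch-free profile by
\[
 V_q(z)=\sqrt{\nu_q(z)}\,e^{\ii\cT L_q(z)}\,\mathcal G(\nu_q(z))\,\frac{r_q(z)}{|r_q(z)|}\,e^{-\ii\nu_q\log2}\,e^{\ii\pi/2}.
\]
Equivalently, using $|\Gamma(\ii\nu)|^2=\pi/(\nu\sinh\pi\nu)$, it can be written as a smooth multiple of $r_q$, for instance $c(\nu_q)\,r_q\,e^{\ii\cT L_q}\,\mathcal G(\nu_q)$ with $c$ smooth. In this form $V_q\in\Sch$ apart from the mild regularity of $\cT L_q$, which is smooth with at most logarithmic growth. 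This settles the phase bookkeeping and the claim that $V_q=0$ exactly where $\nu_q=0$.

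The key step converts the pointwise ray expansion into the $L^2$ Fourier statement while keeping the exact Fresnel operator. Set $W(t)=\F^{-1}\bigl(e^{-\ii(\log t)\nu_q}V_q\bigr)$. I will show that $\|q(t)-e^{\ii t\partial_x^2}W(t)\|_2\to0$; this is equivalent to \eqref{eq:2.11} by unitarity of $\F$ and of the free group.

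For this I use the metaplectic factorization
\[
 e^{\ii t\partial_x^2}=\mathcal M_t\mathcal D_t\F\mathcal M_t,\qquad \mathcal M_t=e^{\ii x^2/(4t)},
\]
with $\mathcal D_t$ the $L^2$-unitary dilation $f\mapsto(2t)^{-1/2}f(x/2t)$ up to a constant phase. This gives
\[
 e^{\ii t\partial_x^2}W(t)=\mathcal M_t\mathcal D_t\bigl(e^{-\ii(\log t)\nu_q}V_q\bigr)+\mathcal M_t\mathcal D_t\F(\mathcal M_t-1)W(t).
\]
The first term is exactly the leading Deift--Zhou term once the constants are matched; this matching fixes the $\pi/2$ and $\log 2$ in \eqref{eq:2.12}. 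The second term is bounded in $L^2$ by $\|(\mathcal M_t-1)W(t)\|_2\lesssim t^{-1/2}\|xW(t)\|_2$, up to $\min(1,\cdot)$ interpolation. Moreover $\|xW(t)\|_2=\|\partial_\xi(e^{-\ii(\log t)\nu_q}V_q)\|_2\lesssim\log t$. The second term is therefore $O(t^{-1/2}\log t)$.

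The main obstacle is that the Deift--Zhou remainder is only uniform in $x$, $O(t^{-1/2-\kappa})$, which is not integrable on $\R$. I will split into $|x|\le t^{1/2+\kappa/2}$, where the $L^\infty$ bound gives an $L^2$ error $O(t^{-\kappa/4})$, and the complementary region $|x|\ge t^{1/2+\kappa/2}$, i.e.\ $|z|\gtrsim t^{-1/2+\kappa/2}$... This is not quite right, since large $|x|$ corresponds to large $|z|$. I will handle $|x|\ge Rt$ directly: the leading term there is controlled by the tail $\int_{|z|>R/2}\nu_q$, which is small because $\nu_q$ is rapidly decaying. For $q(t)$ itself on $|x|\ge Rt$, I will use that $\|q(t)\|_{L^2(|x|\ge Rt)}$ is small uniformly in large $t$. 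This follows from conservation together with weighted decay: for $C_c^\infty$ data, the Galilean/virial bound $\|(x+2\ii t\partial_x)q\|_2\lesssim_q 1$ of the $H^{1,1}$ theory (or the spectral identity $|\widehat{q}|^2\to\nu_q$ tightness) gives uniform tightness of $\mathcal D_t^{-1}$-rescaled mass. The middle region $|x|\le Rt$ has measure $O(t)$, so an $L^\infty$ error of $t^{-1/2-\kappa}$ yields an $L^2$ error $O(t^{-\kappa})$. Letting $t\to\infty$ and then $R\to\infty$ proves \eqref{eq:2.11}, and $|V_q|^2=\nu_q$ holds by construction, consistently with Proposition~\ref{prop:necessity}.
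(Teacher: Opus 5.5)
Your architecture matches the paper's. You use $q(t)=J_tK_tF_q(t)$ with $J_t=\mathcal M_t\mathcal D_t$, and the Deift--Zhou leading term is $J_t(e^{-\ii\nu_q\log t}V_q)$. Your first-moment estimate $\|(\mathcal M_t-1)W(t)\|_2\lesssim t^{-1/2}\log t$ is the Fourier-side form of the paper's bound $\|(K_t^{-1}-\mathrm{Id})f\|_2\le(4t)^{-1}\|f''\|_2$, since $\F\mathcal M_t\F^{-1}=K_t$. Your treatment of $|x|\le Rt$ and of the tail of the leading term is also fine.

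\textbf{The gap is the tail of $q(t)$ itself on $|x|\ge Rt$.} The bound $\|(x+2\ii t\partial_x)q(t)\|_2\lesssim_q1$ is not part of the $H^{1,1}$ theory and should not be expected to hold:
\begin{itemize}
\item $\|(x+2\ii t\partial_x)q(t)\|_2=\|\partial_\xi F_q(t)\|_2$, and the profile $e^{-\ii\nu_q\log t}V_q$ you are aiming at has $\xi$-derivative of size about $\log t\,\|\nu_q'V_q\|_2$.
\item What is available a priori in this $L^2$-subcritical defocusing case is the pseudo-conformal (Ginibre--Velo) bound $\|(x+2\ii t\partial_x)q(t)\|_2=O(t^{1/2})$. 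With it, $\|(\mathcal M_t-1)e^{-\ii t\partial_x^2}q(t)\|_2\lesssim t^{-1/2}\|(x+2\ii t\partial_x)q(t)\|_2$ is merely bounded, not small. So you cannot transfer the energy bound on high frequencies into physical tightness at scale $Rt$.
\item Your alternative, tightness from ``$|\widehat q|^2\to\nu_q$'', is essentially the conclusion you are trying to prove.
\item Mass conservation alone does not say where the mass sits.
\end{itemize}

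\textbf{The missing ingredient is the trace formula} $\|q\|_2^2=\int_\R\nu_q=\|V_q\|_2^2$ from \eqref{eq:2.2} (cf.\ \eqref{eq:2.16}). With it, your own estimate closes the argument. On $|x|\le Rt$ the uniform remainder gives $\|q(t)\|_{L^2(|x|\le Rt)}^2\to\int_{|z|\le R/2}\nu_q$. Mass conservation then yields $\limsup_{t\to\infty}\|q(t)\|_{L^2(|x|>Rt)}^2\le\int_{|z|>R/2}\nu_q$, which tends to $0$ as $R\to\infty$. This is how the paper proceeds, in compressed form. Local uniform convergence of $e^{\ii\nu_q\log t}J_t^{-1}q(t)$ gives weak $L^2$ convergence to $V_q$, and the norm identity $\|q\|_2=\|V_q\|_2$ upgrades it to strong convergence.
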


\begin{proof}
Put
\begin{equation}
 (J_tg)(x)=(2\ii t)^{-1/2}e^{\ii x^2/(4t)}g(x/(2t)),
 \qquad K_t=e^{-\ii(4t)^{-1}\partial_z^2}.             \label{eq:2.14}
\end{equation}
The exact metaplectic identity, obtained by completing the square in the
unitary Fourier integral, gives
$J_t^{-1}q(t)=K_tF_q(t)$.  The Deift--Zhou theorem
\cite[Theorem~1.1 and (1.2)]{DeiftZhou} gives, locally uniformly in the ray variable,
\begin{equation}
 e^{\ii\nu_q\log t}J_t^{-1}q(t)\longrightarrow
 V_q=\sqrt2e^{\ii\pi/4}e^{-\ii\nu_q\log2}\alpha_q,   \label{eq:2.15}
\end{equation}
where $\alpha_q$ is the leading coefficient in
\cite[(1.2)]{DeiftZhou}; it is characterized here by
$|\alpha_q|^2=\nu_q/2$ and
\begin{equation*}
 \arg\alpha_q=\cT L_q+\frac\pi4
       +\arg\Gamma(\ii\nu_q)+\arg r_q.
\end{equation*}
The additional factor $e^{\ii\pi/4}$ in \eqref{eq:2.15} explains the
constant $\pi/2$ in \eqref{eq:2.12}.  By \eqref{eq:2.10},
\eqref{eq:2.2}, and $|V_q|^2=\nu_q$, we have
\begin{equation}
 \|q\|_2^2=-\frac1{2\pi}\int_{\R}L_q(z)\dd z
 =\int_{\R}\nu_q(z)\dd z=\|V_q\|_2^2.                \label{eq:2.16}
\end{equation}
The left side of \eqref{eq:2.15} is bounded in $L^2$.  Local uniform
convergence therefore implies weak $L^2$ convergence: first test against
$C_c^\infty$, then use density.  Equality of the norms in \eqref{eq:2.16}
upgrades weak to strong convergence.

For regular data, $r_q$ and $L_q$ are Schwartz.  Formula \eqref{eq:4.1}
below shows that $\cT L_q\in H^k$ for every $k$: its logarithmic
multiplier is locally square integrable at frequency zero and grows more
slowly than every positive power at infinity.  The apparently singular
local factor can be written without choosing an argument as
\begin{equation*}
 r_q\left(\frac{\nu_q}{|r_q|^2}\right)^{1/2}
 \mathcal G(\nu_q).
\end{equation*}
Here $\nu_q/|r_q|^2$ has a smooth positive extension at $r_q=0$, and the
normalized Gamma factor has the smooth endpoint value fixed in
\eqref{eq:2.10a}.  Thus $V_q,\nu_q$ have the Sobolev regularity needed
below.  Indeed, writing $\ell=\log t$, direct differentiation gives
\begin{align*}
 \partial_z^2(e^{-\ii\ell\nu_q}V_q)
 =e^{-\ii\ell\nu_q}\bigl(
 V_q''-2\ii\ell\nu_q'V_q'-\ii\ell\nu_q''V_q
 -\ell^2(\nu_q')^2V_q\bigr).
\end{align*}
Since $\nu_q$ is Schwartz and $V_q\in H^k(\R)$ for every $k$,
\begin{align*}
 \|\partial_z^2(e^{-\ii\ell\nu_q}V_q)\|_2
 &\le \|V_q''\|_2
 +2|\ell|\|\nu_q'\|_\infty\|V_q'\|_2
 +|\ell|\|\nu_q''\|_\infty\|V_q\|_2\\
 &\quad+\ell^2\|\nu_q'\|_\infty^2\|V_q\|_2.
\end{align*}
Consequently,
\begin{equation}
 \left\|\partial_z^2
   (e^{-\ii\nu_q\log t}V_q)\right\|_2
 \le C(q)(1+(\log t)^2).                               \label{eq:2.17}
\end{equation}
Consequently, writing $\mathrm{Id}$ for the identity operator and using
$\|(K_t-\mathrm{Id})f\|_2=\|(K_t^{-1}-\mathrm{Id})f\|_2$, we have
\begin{equation}
 \|(K_t^{-1}-\mathrm{Id})e^{-\ii\nu_q\log t}V_q\|_2
 \le\frac1{4t}
 \|\partial_z^2(e^{-\ii\nu_q\log t}V_q)\|_2=o(1).     \label{eq:2.18}
\end{equation}
Writing $G_q(t)=J_t^{-1}q(t)=K_tF_q(t)$, unitarity gives explicitly
\begin{align*}
 \|e^{\ii\nu_q\log t}F_q(t)-V_q\|_2
 &\le \|G_q(t)-e^{-\ii\nu_q\log t}V_q\|_2+\|(K_t-\mathrm{Id})e^{-\ii\nu_q\log t}V_q\|_2.
\end{align*}
The strong version of \eqref{eq:2.15} and \eqref{eq:2.18} prove
\eqref{eq:2.11}; no commutation of $K_t$ with the variable phase is used.
\end{proof}

\section{Far-bump interference and the profile-jump mechanism}

\subsection{Exact scattering interference of separated bumps}

Fix a nonzero real function $p\in C_c^\infty(0,1)$ and put
$p_X(x)=p(x-X)$.  The following identities are the reason that a spatially
distant perturbation may be tiny in $L^1\cap L^2$ and nevertheless have a
large effect on the final phase.

\begin{lemma}[Concatenation and translation]
\label{lem:transfer}
If $q_l$ lies strictly to the left of $q_r$, in the sense that
$\sup\operatorname{supp}q_l<\inf\operatorname{supp}q_r$, then
\begin{equation}
 A_{q_l+q_r}=A_{q_l}A_{q_r}.                             \label{eq:3.1}
\end{equation}
Moreover, with $E(X)=e^{\ii zX\sigma}$,
\begin{equation}
 A_{p_X}=E(-X)A_pE(X),\qquad
 a_{p_X}=a_p,\qquad b_{p_X}=e^{\ii Xz}b_p.             \label{eq:3.2}
\end{equation}
Consequently, if $p_X$ lies to the right of $q$,
\begin{align}
 a_{q+\epsilon p_X}
  &=a_q a_{\epsilon p}
    +\overline{b_q}\,e^{\ii Xz}b_{\epsilon p},\notag\\
 b_{q+\epsilon p_X}
  &=b_q a_{\epsilon p}
    +\overline{a_q}\,e^{\ii Xz}b_{\epsilon p}.         \label{eq:3.3}
\end{align}
\end{lemma}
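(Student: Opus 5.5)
The plan works directly with the Deift--Zhou Jost matrices of \eqref{eq:2.8}--\eqref{eq:2.9}, for real $z$ and compactly supported potentials, where everything is explicit. The key observation is that outside the support of the potential the equation is free. Hence $\psi^{(+)}(x,z)=E(x)$ for $x\ge\sup\operatorname{supp}q$, and $\psi^{(-)}(x,z)=E(x)$ for $x\le\inf\operatorname{supp}q$. It follows that $A_q=\psi^{(-)}(x)^{-1}\psi^{(+)}(x)$ for any $x$, and this ratio is $x$-independent because both factors solve the same linear system. Equivalently, if $\Phi_q(x,y)$ denotes the propagator from $y$ to $x$, then for any $y_0$ to the left of the support and $x_0$ to its right,
\[
A_q=E(y_0)^{-1}\,\Phi_q(y_0,x_0)\,E(x_0).
\]

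For \eqref{eq:3.1}, pick a point $c$ strictly between the two supports and factor $\Phi_{q_l+q_r}(y_0,x_0)=\Phi_{q_l}(y_0,c)\Phi_{q_r}(c,x_0)$. This is valid because on $(-\infty,c]$ the combined potential equals $q_l$, and on $[c,\infty)$ it equals $q_r$. Inserting $E(c)E(c)^{-1}$ in the middle splits the formula into the representations of $A_{q_l}$ and $A_{q_r}$, with $c$ serving as the right endpoint for $q_l$ and the left endpoint for $q_r$.

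For \eqref{eq:3.2}, observe that translation commutes with the constant-coefficient free part, so $\Phi_{p_X}(y+X,x+X)=\Phi_p(y,x)$. Substituting into the representation, with endpoints $y_0+X$ and $x_0+X$, gives
\[
A_{p_X}=E(y_0+X)^{-1}\Phi_p(y_0,x_0)E(x_0+X)=E(-X)A_pE(X),
\]
using $E(a+b)=E(a)E(b)$. Since $E(X)=\operatorname{diag}(e^{\ii zX/2},e^{-\ii zX/2})$, conjugation leaves the diagonal entries unchanged and multiplies the $(2,1)$ entry by $e^{\ii Xz}$. Thus $a_{p_X}=a_p$ and $b_{p_X}=e^{\ii Xz}b_p$. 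The same identity holds with $p$ replaced by $\epsilon p$, whose $a$ and $b$ coefficients are those of the matrix $A_{\epsilon p}$.

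Formula \eqref{eq:3.3} then follows by multiplying $A_qA_{\epsilon p_X}$ using the $SU(1,1)$ structure in \eqref{eq:2.9}. The $(1,1)$ entry is $a_qa_{\epsilon p}+\overline{b_q}e^{\ii Xz}b_{\epsilon p}$. The $(2,1)$ entry is $b_qa_{\epsilon p}+\overline{a_q}e^{\ii Xz}b_{\epsilon p}$.

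There is no genuine obstacle here; the only points needing care are conventions. Deift--Zhou write $\psi^{(+)}=\psi^{(-)}A$ rather than the reverse, so the order of multiplication must be fixed correctly; with the reverse convention one would obtain $A_{q_r}A_{q_l}$. One must also check the placement of the conjugates and the sign of the exponent in $E$.
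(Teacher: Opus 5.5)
Your proof is correct and follows essentially the paper's route. The paper concatenates through the physical left-to-right transfer $\mathcal P_q=A_q^{-1}$ and gets the translation rule by substituting $\psi(x-X,z)E(X)$ into the Jost normalization; your propagator representation $A_q=E(y_0)^{-1}\Phi_q(y_0,x_0)E(x_0)$ just spells this out, and your restriction to compactly supported potentials is harmless because the lemma is only applied to finite sums of $C_c^\infty$ bumps.
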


\begin{proof}
The physical left-to-right transfer is $\mathcal P_q=A_q^{-1}$.  Physical
transfers concatenate as $\mathcal P_{q_l+q_r}=\mathcal P_{q_r}\mathcal P_{q_l}$, which is
equivalent to \eqref{eq:3.1}.  Substituting
$\psi_X(x,z)=\psi(x-X,z)E(X)$ in the Jost normalization proves
\eqref{eq:3.2}; multiplying the two matrices gives \eqref{eq:3.3}.
\end{proof}

For $N\ge0$, put $\langle z\rangle=(1+|z|^2)^{1/2}$ and write
\begin{equation*}
 \|f\|_{\Sch_N}
 =\max_{j+k\le N}\sup_{z\in\R}
       \langle z\rangle^j|\partial_z^kf(z)|,
 \qquad
 \|f\|_{C_b^N}
 =\max_{0\le k\le N}\sup_{z\in\R}|\partial_z^kf(z)|.
\end{equation*}
Thus $R_\epsilon=O_{\Sch_N}(\epsilon^j)$ refers to the first seminorm;
$O_{\Sch}(\epsilon^j)$ means this estimate for every $N$ (with an
$N$-dependent constant), and a subscript on $O_N$ records dependence of
the implied constant.
Volterra iteration in \eqref{eq:2.8} and the parity of the diagonal and
off-diagonal series give, for every fixed $N$ and locally uniformly for
$|\epsilon|\le\epsilon_0$, where $\epsilon_0>0$ is fixed and sufficiently
small,
\begin{equation}
 \|a_{\epsilon p}-1\|_{C_b^N}=O_N(\epsilon^2),\qquad
 \|b_{\epsilon p}-\epsilon\beta\|_{\Sch_N}=O_N(\epsilon^3),
 \qquad
 \beta(z)=-\int_{\R}\overline{p(y)}e^{\ii yz}\dd y.    \label{eq:3.4}
\end{equation}
To justify the seminorm estimates, define the normalized Volterra solution
in the Deift--Zhou convention by
\begin{equation*}
 m_q^{(+)}(x,z)=\psi_q^{(+)}(x,z)e^{-\ii xz\sigma},
 \qquad m_q^{(+)}(x,z)\longrightarrow I_2\quad(x\to+\infty).
\end{equation*}
Its Neumann series at $q=\epsilon p$ has the form
\begin{equation*}
 m_{\epsilon p}^{(+)}=I_2+\sum_{n\ge1}\epsilon^n m_n^{(+)}.
\end{equation*}
Because the potential matrix is off diagonal, the diagonal entries contain
only even powers of $\epsilon$, while the off-diagonal entries contain only
odd powers.  In particular,
\begin{equation*}
 (m_{\epsilon p}^{(+)})_{11}=1+O(\epsilon^2),\qquad
 (m_{\epsilon p}^{(+)})_{21}=O(\epsilon).
\end{equation*}
Using \cite[(3.5c)]{DeiftZhou},
\begin{align*}
 a_{\epsilon p}(z)
 &=1-\epsilon\int_\R p(y)
       (m_{\epsilon p}^{(+)})_{21}(y,z)\dd y,\\
 b_{\epsilon p}(z)
 &=-\epsilon\int_\R\overline{p(y)}e^{\ii yz}
       (m_{\epsilon p}^{(+)})_{11}(y,z)\dd y.
\end{align*}
Since $p$ is smooth and compactly supported, differentiation in $z$ only
inserts bounded polynomial factors in the ordered Volterra integrals.
The diagonal coefficients are uniformly bounded in $C_b^N$, while
integration by parts in the compact integration variables gives the
corresponding $\Sch_N$ bounds for the off-diagonal coefficients.  The
ordered-simplex estimate makes the resulting series locally uniform for
$|\epsilon|\le\epsilon_0$.  This proves the two estimates in
\eqref{eq:3.4} with constants depending only on $N,p$, and $\epsilon_0$.
For a fixed compactly supported smooth $q$, the functions
$a_q^{-1}$ and $a_{\epsilon p}^{-1}$ have bounded derivatives of every
order on the real axis, locally uniformly for $|\epsilon|\le\epsilon_0$;
the factors $b_q/a_q$ and
$b_{\epsilon p}/a_{\epsilon p}$ are Schwartz.  The complex function
$a_{\epsilon p}-1$ itself need not be Schwartz.  The quantity used below
is instead
\begin{equation}
 -2\log|a_{\epsilon p}|
 =\log(1-|r_{\epsilon p}|^2)=O_{\Sch}(\epsilon^2).      \label{eq:3.5}
\end{equation}
Define
\begin{equation}
 C_q(z)=\beta(z)\frac{\overline{b_q(z)}}{a_q(z)}.        \label{eq:3.6}
\end{equation}
Equations \eqref{eq:2.10}, \eqref{eq:3.3}, and \eqref{eq:3.4} give
\begin{equation}
 L_{q+\epsilon p_X}-L_q
 =-2\epsilon\Re(e^{\ii Xz}C_q(z))
   +R_{q,\epsilon,X}(z).                                \label{eq:3.7}
\end{equation}
More explicitly, with
\begin{equation}
 w_\epsilon
 =b_{\epsilon p}\frac{\overline b_q}{a_{\epsilon p}a_q},
 \qquad
 w_\epsilon=\epsilon C_q+O_{\Sch_N}(\epsilon^3)
 \quad\text{for every }N,                              \label{eq:3.7a}
\end{equation}
the remainder is
\begin{align}
 R_{q,\epsilon,X}
 ={}&-2\log|a_{\epsilon p}|\notag\\
 &-2\Re\left[e^{\ii Xz}(w_\epsilon-\epsilon C_q)
   +\sum_{j\ge2}\frac{(-1)^{j+1}}j
       e^{\ii jXz}w_\epsilon^j\right].                 \label{eq:3.7b}
\end{align}
For sufficiently small $\epsilon$ the series converges absolutely in
every Schwartz seminorm after multiplication by the fixed Schwartz factor
$\overline b_q/a_q$.

We shall also use elementary $L^1$ stability.  Write $\|\cdot\|_{\rm op}$
for the operator norm on $2\times2$ matrices.  The interaction-frame transfer equation and
Gronwall's inequality give an absolute constant $C$ such that, uniformly
on the real spectral axis,
\begin{equation}
 \|A_q-A_{\widetilde q}\|_{L^\infty_z;\mathrm{op}}
 \le C\exp\!\left(C(\|q\|_1+\|\widetilde q\|_1)\right)
      \|q-\widetilde q\|_1.                            \label{eq:3.8}
\end{equation}
Since $|a_q|\ge1$ in the defocusing problem and $\log x$ is
one-Lipschitz on $[1,\infty)$,
\begin{equation}
 \|\nu_q-\nu_{\widetilde q}\|_\infty
 \le C_M\|q-\widetilde q\|_1,
 \quad \|q\|_1,\|\widetilde q\|_1\le M.
                                                               \label{eq:3.9}
\end{equation}
Here $C_M>0$ depends only on the displayed $L^1$-ball radius $M$.
The same estimates give a uniform spectral gap on an $L^1$ ball because
$|a_q|\le e^{C\|q\|_1}$ and
$1-|r_q|^2=|a_q|^{-2}$.

\subsection{The one-sided logarithmic amplifier}

The exact multiplier of \eqref{eq:2.13} is the central calculation.
Unless stated otherwise, limits in this subsection are taken as
$X\to+\infty$, with $q,p,J$, and $\kappa$ fixed.

\begin{lemma}[One-sided logarithmic multiplier]
\label{lem:multiplier}
Let $\gamma$ denote the Euler--Mascheroni constant.  For $h\in\Sch(\R)$,
\begin{equation}
 \widehat{\cT h}(\theta)=\mathfrak l(\theta)\widehat h(\theta),\qquad
 \mathfrak l(\theta)=-\frac{\log|\theta|+\gamma}{\pi}
         -\frac{\ii}{2}\sgn\theta.                    \label{eq:4.1}
\end{equation}
Consequently, for fixed $H\in\Sch$ and a bounded interval $J$,
\begin{equation}
 \cT(e^{\ii X\cdot}H)
 =-\left(\frac{\log X+\gamma}{\pi}+\frac\ii2\right)
   e^{\ii X\cdot}H+o_{L^\infty(J)\cap L^2(J)}(1).      \label{eq:4.2}
\end{equation}
Moreover, for every bounded interval $J$ there are $N$ and $C_J$ such that,
for $Y\ge2$ and $c\in\Sch(\R)$,
\begin{equation}
 \|\cT(e^{\pm\ii Y\cdot}c)\|_{L^\infty(J)}
 +\|\cT(e^{\pm\ii Y\cdot}c)\|_{L^2(J)}
 \le C_J(1+\log Y)\|c\|_{\Sch_N}.                     \label{eq:4.2a}
\end{equation}
\end{lemma}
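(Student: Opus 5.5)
The plan is to recognize $\cT$ as a convolution with a one-sided logarithmic kernel, compute its Fourier multiplier by regularization, and then read off \eqref{eq:4.2} and \eqref{eq:4.2a} from the explicit form of $\mathfrak l$.

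\textbf{The multiplier \eqref{eq:4.1}.} Put $K(u)=\log u\,\mathbf 1_{u>0}$. Then \eqref{eq:2.13} reads $\cT h=\pi^{-1}K*h'$, and $K$ is a tempered, locally integrable function.

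\begin{itemize}[leftmargin=*]
\item In the unitary convention \eqref{eq:1.2}, $\widehat{K*g}(\theta)=\bigl(\int_0^\infty \log u\,e^{-\ii u\theta}\dd u\bigr)\widehat g(\theta)$, where the integral is understood distributionally, and $\widehat{h'}(\theta)=\ii\theta\widehat h(\theta)$.
\item To compute the integral, regularize with the damped kernel $K_\varepsilon(u)=e^{-\varepsilon u}K(u)$. Using $\int_0^\infty u^{s-1}e^{-\lambda u}\dd u=\Gamma(s)\lambda^{-s}$ and differentiating at $s=1$ gives, for $\Re\lambda>0$,
\begin{equation*}
 \int_0^\infty \log u\,e^{-\lambda u}\dd u=-\frac{\gamma+\log\lambda}{\lambda}.
\end{equation*}
Take $\lambda=\varepsilon+\ii\theta$ with the principal branch.
\item Multiplying by $\ii\theta$ and letting $\varepsilon\downarrow0$ gives the symbol $-(\gamma+\log|\theta|+\ii\tfrac\pi2\sgn\theta)$. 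Dividing by $\pi$ yields $\mathfrak l$.
\end{itemize}

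The only delicate point is passing to the limit $\varepsilon\downarrow0$. For $h\in\Sch$, the identity $K_\varepsilon*h'\to K*h'$ holds pointwise by dominated convergence, since $h'$ is Schwartz and $K$ has only logarithmic growth. On the Fourier side, the products $\ii\theta\,\widehat{K_\varepsilon}(\theta)\widehat h(\theta)$ are bounded by $C(1+|\log|\theta||)|\widehat h(\theta)|$ uniformly for $\varepsilon\le1$. This bound is integrable because the logarithm is integrable at $0$ and $\widehat h$ decays rapidly, so the limit can be taken in $L^1$ and the two limits identified. I expect this justification, that is, the distributional Fourier transform of $\log_+$ and the harmless logarithmic singularity at $\theta=0$, to be the main technical step. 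Everything afterwards is estimation of an explicit symbol.

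\textbf{The asymptotic \eqref{eq:4.2}.} Since $\widehat{e^{\ii X\cdot}H}(\theta)=\widehat H(\theta-X)$, we have
\begin{equation*}
 \cT(e^{\ii X\cdot}H)-\mathfrak l(X)e^{\ii X\cdot}H
 =\F^{-1}\bigl[(\mathfrak l(\theta)-\mathfrak l(X))\widehat H(\theta-X)\bigr],
\end{equation*}
and for $X>0$ one has $\mathfrak l(X)=-(\log X+\gamma)/\pi-\ii/2$. I will bound the bracket in $L^1_\theta\cap L^2_\theta$, which controls the $L^\infty$ norm and the $L^2$ norm respectively, hence also the norms on $J$. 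Split the $\theta$-line into two regions.

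\begin{itemize}[leftmargin=*]
\item On $|\theta-X|<X/2$: here $\sgn\theta=\sgn X$ and $|\log\theta-\log X|\le C|\theta-X|/X$. The contribution is therefore $O(X^{-1}\|\langle\cdot\rangle\widehat H\|_{L^1\cap L^2})$.
\item On $|\theta-X|\ge X/2$: use $|\mathfrak l(\theta)-\mathfrak l(X)|\le C(1+\log X+|\log|\theta||)$ together with $|\widehat H(\theta-X)|\le C_k\langle\theta-X\rangle^{-k}\lesssim X^{-k+2}\langle\theta-X\rangle^{-2}$. The logarithmic singularity at $\theta=0$ is integrable and contributes $O(X^{-k})$. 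The contribution is $O(X^{-k+3})$.
\end{itemize}

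Both contributions tend to $0$ as $X\to+\infty$, which gives \eqref{eq:4.2}.

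\textbf{The uniform bound \eqref{eq:4.2a}.} The same splitting, now without subtracting $\mathfrak l(X)$, gives
\begin{equation*}
 \int|\mathfrak l(\theta)||\widehat c(\theta\mp Y)|\dd\theta
 \le C(1+\log Y)\sup_\theta\langle\theta\rangle^2|\widehat c(\theta)|.
\end{equation*}
The last factor is bounded by $C\|c\|_{\Sch_N}$ for a fixed $N$ (for instance $N=4$), by integration by parts. This controls the $L^\infty$ norm on all of $\R$. The $L^2(J)$ norm is then bounded by $|J|^{1/2}$ times the $L^\infty$ bound, which proves \eqref{eq:4.2a}.
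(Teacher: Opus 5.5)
Your proposal is correct and follows essentially the paper's route: the same Gamma-function/Abel regularization differentiated at $s=1$, with the $\varepsilon\downarrow0$ limit justified by local integrability of $\log|\theta|$ (the factor $\ii\theta$ from $h'$ removes any frequency-zero ambiguity), and the same split at $|\theta-X|=X/2$ for both \eqref{eq:4.2} and \eqref{eq:4.2a}. The only small difference is that you get the $L^2(J)$ part of \eqref{eq:4.2a} from the global $L^\infty$ bound times $|J|^{1/2}$, whereas the paper uses a weighted Plancherel estimate; your shortcut is equally valid because $J$ is bounded.
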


\begin{proof}
For $\delta>0$, Abel regularization gives
\begin{equation*}
 \int_0^\infty e^{-(\delta+\ii\xi)u}u^{s-1}\dd u
 =\Gamma(s)(\delta+\ii\xi)^{-s}.
\end{equation*}
Differentiate at $s=1$, multiply by the Fourier multiplier $\ii\xi$
coming from $h'$, and divide by $\pi$.  Letting $\delta\downarrow0$
gives, for $\xi>0$,
\begin{equation*}
 \frac{\ii\xi}{\pi}
 \int_0^\infty e^{-\ii\xi u}\log u\dd u
 =-\frac{\log\xi+\gamma}{\pi}-\frac\ii2.
\end{equation*}
The formula for $\xi<0$ follows by conjugation.  The Abel multipliers
converge after multiplication by $\widehat h$ in both Fourier $L^1$ and
$L^2$; this follows by splitting at $|\xi|=1$ and using the local
integrability of both $|\log|\xi||$ and $|\log|\xi||^2$.  Hence the
limiting calculation is valid
for the absolutely convergent integral in \eqref{eq:2.13}.  A possible
frequency-zero distribution is killed by the derivative.

After subtracting the main term in \eqref{eq:4.2}, the Fourier transform is
$(\mathfrak l(X+\theta)-\mathfrak l(X))\widehat H(\theta)$.  On
$|\theta|\le X/2$ the multiplier difference is $O(|\theta|/X)$.  On the
complement, rapid decay of
$\widehat H$ dominates both logarithmic growth and the locally integrable
singularity at $\theta=-X$.  The error tends to zero in Fourier $L^1$ and
$L^2$, proving \eqref{eq:4.2}.  The same split, without taking a limit,
gives
\begin{equation*}
 \int_\R(1+|\mathfrak l(\pm Y+\theta)|^2)
       |\widehat c(\theta)|^2\dd\theta
 +\left(\int_\R|\mathfrak l(\pm Y+\theta)
       \widehat c(\theta)|\dd\theta\right)^2
 \le C(1+\log Y)^2\|c\|_{\Sch_N}^2
\end{equation*}
for a fixed sufficiently large $N$.  Plancherel and Fourier inversion prove
\eqref{eq:4.2a}.
\end{proof}

\begin{lemma}[Harmonic remainder]
\label{lem:remainder}
Fix $q,p\in C_c^\infty$, a bounded interval $J$, and $\kappa>0$.  If
$\epsilon=\kappa/\log X$, then the remainder in \eqref{eq:3.7} satisfies
\begin{equation}
 \|\cT R_{q,\epsilon,X}\|_{L^\infty(J)\cap L^2(J)}
 \longrightarrow0.                                    \label{eq:4.3}
\end{equation}
Therefore
\begin{equation}
 \cT(L_{q+\epsilon p_X}-L_q)
 =\frac{2\kappa}{\pi}\Re(e^{\ii Xz}C_q(z))
  +o_{L^\infty(J)\cap L^2(J)}(1).
                                                               \label{eq:4.4}
\end{equation}
\end{lemma}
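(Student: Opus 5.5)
We split $R_{q,\epsilon,X}$ according to \eqref{eq:3.7b} into three pieces and estimate $\cT$ of each on $J$, using \eqref{eq:4.1} for non-oscillating terms and \eqref{eq:4.2a} for oscillating ones. Note that $\epsilon=\kappa/\log X$, so $\epsilon\log X=\kappa$ is bounded and every power $\epsilon^{j}$ with $j\ge2$ beats the logarithmic loss.

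\emph{The non-oscillating piece.} By \eqref{eq:3.5}, $-2\log|a_{\epsilon p}|=O_{\Sch}(\epsilon^2)$. The multiplier $\mathfrak l$ in \eqref{eq:4.1} is locally square integrable and grows only logarithmically. Hence $\|\widehat{\cT h}\|_{L^1}+\|\widehat{\cT h}\|_{L^2}\lesssim\|h\|_{\Sch_N}$ for some fixed $N$, which controls $\|\cT h\|_{L^\infty}+\|\cT h\|_{L^2}$. Applied to $h=-2\log|a_{\epsilon p}|$, this piece is $O(\epsilon^2)\to0$.

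\emph{The linear correction.} We write $\Re[e^{\ii Xz}c]=\tfrac12(e^{\ii Xz}c+e^{-\ii Xz}\overline c)$ with $c=w_\epsilon-\epsilon C_q$. By \eqref{eq:3.7a}, $\|c\|_{\Sch_N}=O_N(\epsilon^3)$. Then \eqref{eq:4.2a} with $Y=X$ gives a bound $C_J(1+\log X)\epsilon^3=O(\kappa\epsilon^2)\to0$.

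\emph{The harmonic series.} For $j\ge2$, we apply \eqref{eq:4.2a} with $Y=jX$ to $c_j=w_\epsilon^j/j$ and its conjugate. We need $\|w_\epsilon^j\|_{\Sch_N}\le (C_N\epsilon)^j$ for some fixed $N$. Since $w_\epsilon=b_{\epsilon p}\overline b_q/(a_{\epsilon p}a_q)$, the factor $b_{\epsilon p}/a_{\epsilon p}$ is $O_{\Sch_N}(\epsilon)$ by \eqref{eq:3.4} and the $C_b^N$ control of $a_{\epsilon p}^{-1}$. The factor $\overline b_q/a_q$ is a fixed Schwartz function. Then the Leibniz rule gives $\|fg\|_{\Sch_N}\le C_N\|f\|_{\Sch_N}\|g\|_{C_b^N}$ and $\|f^j\|_{C_b^N}\le (C_N'\|f\|_{C_b^N})^j$ up to a polynomial factor $j^N$ from distributing derivatives. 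This yields $\|w_\epsilon^j\|_{\Sch_N}\le j^N(C\epsilon)^j$.

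Summing over $j$ gives
\[
\sum_{j\ge2}\frac{C_J(1+\log(jX))}{j}\,j^N(C\epsilon)^j\lesssim \epsilon^2(1+\log X)=O(\kappa\epsilon)\to0
\]
for $\epsilon$ small, that is, for $X$ large. The series also converges absolutely in the norms in question. This justifies interchanging $\cT$ with the sum, since $\cT$ is continuous from $\Sch_N$ into the relevant Fourier $L^1\cap L^2$ norms. This proves \eqref{eq:4.3}.

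\emph{Conclusion.} For \eqref{eq:4.4}, we apply $\cT$ to \eqref{eq:3.7}. The main term is $-2\epsilon\cdot\tfrac12\bigl(\cT(e^{\ii X\cdot}C_q)+\cT(e^{-\ii X\cdot}\overline{C_q})\bigr)$. By \eqref{eq:4.2} and its conjugate (the multiplier satisfies $\mathfrak l(-\theta)=\overline{\mathfrak l(\theta)}$), this equals
\[
-\epsilon\Bigl(-\tfrac{\log X+\gamma}{\pi}\Bigr)\,2\Re(e^{\ii Xz}C_q)+\epsilon\,\Re\bigl(\ii\, e^{\ii Xz}C_q\bigr)\text{-type terms}+\epsilon\, o(1).
\]
The $\pm\ii/2$ contributions are multiplied by $\epsilon$, and the $\gamma$ term is also $O(\epsilon)$. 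The leading term is $\frac{2\epsilon\log X}{\pi}\Re(e^{\ii Xz}C_q)=\frac{2\kappa}{\pi}\Re(e^{\ii Xz}C_q)$.

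The main obstacle is the uniform Schwartz-seminorm bound on the powers $w_\epsilon^j$, with only geometric growth in $j$, together with the $\log(jX)$ factor from \eqref{eq:4.2a}. One must check that $C_J$ and $N$ in \eqref{eq:4.2a} are independent of $Y$, which the lemma provides.
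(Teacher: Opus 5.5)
Your proposal is correct and follows the paper's proof essentially step for step. It uses the same three-way split of \eqref{eq:3.7b}: the non-oscillatory $O_{\Sch}(\epsilon^2)$ term, the residual first harmonic of size $O_{\Sch}(\epsilon^3)$, and the harmonics $j\ge2$, each costing $1+\log(jX)$ via \eqref{eq:4.2a} against geometrically decaying Schwartz coefficients, for a total of $O(\epsilon^2\log X)=O(\kappa\epsilon)$. It then applies \eqref{eq:4.2} to extract $\frac{2\kappa}{\pi}\Re(e^{\ii Xz}C_q)$, with your explicit check that the $\gamma$ and $\pm\ii/2$ contributions are only $O(\epsilon)$.
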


\begin{proof}
Factor the first line of \eqref{eq:3.3} as
\begin{equation}
 a_{q+\epsilon p_X}=a_qa_{\epsilon p}
  (1+e^{\ii Xz}w_\epsilon),\qquad
 w_\epsilon=b_{\epsilon p}
  \frac{\overline b_q}{a_{\epsilon p}a_q}.
                                                               \label{eq:4.5}
\end{equation}
Formula \eqref{eq:3.7b} displays every term of the remainder.  The
nonoscillatory term $-2\log|a_{\epsilon p}|$ is
$O_{\Sch_N}(\epsilon^2)$ for every $N$.  Taking the real part of the
series produces positive and negative harmonics
$e^{\pm\ii jXz}c^{\pm}_{j,\epsilon}(z)$.  The Schwartz algebra estimate
and \eqref{eq:3.7a} give, for every fixed $N$,
\begin{equation}
 \sum_{j\ge2}(1+\log j)
 \bigl(\|c^+_{j,\epsilon}\|_{\Sch_N}
       +\|c^-_{j,\epsilon}\|_{\Sch_N}\bigr)
 \le C_{N,q,p}\epsilon^2.                              \label{eq:4.5a}
\end{equation}
Indeed,
$\|w_\epsilon^j\|_{\Sch_N}\le jC_N^j\epsilon^j$, while the factor
$1/j$ is present in the logarithmic series; the weighted sum is geometric
for small $\epsilon$.

There is also a residual first harmonic because
$w_\epsilon-\epsilon C_q=O_{\Sch}(\epsilon^3)$.  Its two coefficients
are $O_{\Sch}(\epsilon^3)$, so Lemma~\ref{lem:multiplier} bounds its
contribution by
\begin{equation*}
 O(\epsilon^3(1+\log X))
 =O\!\left(\frac{\kappa^3}{(\log X)^2}\right)=o(1).
\end{equation*}

By \eqref{eq:4.2a}, the $j$th harmonic costs at most
$C_{q,p,J}(1+\log(jX))$ in $L^\infty(J)\cap L^2(J)$.  The
nonoscillatory term costs $O(\epsilon^2)$, so \eqref{eq:4.5a} yields
\begin{equation}
 \|\cT R_{q,\epsilon,X}\|_{L^\infty(J)\cap L^2(J)}
 \le C_{q,p,J}\epsilon^2(1+\log X)
 =O_{q,p,J}\left(\frac{\kappa^2}{\log X}\right).      \label{eq:4.6}
\end{equation}
The factors $\log j$ are summable against the geometrically decaying
power-series coefficients.  This proves \eqref{eq:4.3}; combining it with
\eqref{eq:3.7} and Lemma~\ref{lem:multiplier} proves \eqref{eq:4.4}.
\end{proof}

\subsection{Uniform profile-jump insertion after arbitrary budgets}

We now package the spectral calculation in the form required by an online
diagonal construction.

Choose a compact interval $J$ of positive length contained in
$\{z:\beta(z)\ne0\}$, and choose a small $\eta>0$.  Let the
anchor be
\begin{equation}
 q_0=\eta p.                                           \label{eq:5.1}
\end{equation}
Then
\begin{equation}
 b_{q_0}=\eta\beta+O_{L^\infty(J)}(\eta^3),
 \qquad C_{q_0}=\eta|\beta|^2+O_{L^\infty(J)}(\eta^3) \label{eq:5.2}
\end{equation}
on $J$.  Choose $E_*>0$ so small that every finite, right-ordered sum
\begin{equation}
 q=q_0+\sum_{j=1}^N\epsilon_jp(\cdot-X_j),\qquad
 \sum_j\epsilon_j\le E_*                              \label{eq:5.3}
\end{equation}
satisfies, by \eqref{eq:3.8},
\begin{equation}
 \inf_J|r_q|\ge r_*>0,\qquad
 \inf_J|C_q|\ge c_*>0,\qquad
 \inf_J\nu_q\ge\nu_*>0,
 \qquad \sup_J|r_q|\le\sqrt{1-\delta_*}               \label{eq:5.4}
\end{equation}
for constants $r_*,c_*,\nu_*,\delta_*>0$ independent of the finite sum.
Indeed, \eqref{eq:3.8}, $|a_q|\ge1$, and the quotient rule give, on the
fixed $L^1$ ball, with $M=(\eta+E_*)\|p\|_1$,
\begin{equation*}
 \|r_q-r_{q_0}\|_{L^\infty(\R)}
 +\|C_q-C_{q_0}\|_{L^\infty(J)}
 \le K_M\|q-q_0\|_1.
\end{equation*}
Thus the first three bounds follow from \eqref{eq:5.2} after decreasing
$E_*$.  The last bound follows from the uniform spectral gap after
\eqref{eq:3.9}, while $|C_q|=|\beta|\,|b_q/a_q|\le|\beta|$ supplies a
uniform upper bound.

\begin{proposition}[Far-bump insertion after arbitrary budgets]
\label{prop:insertion}
There are $\kappa>0$ and $d_*>0$, depending only on the anchor
construction above, with the following property.  Let $q$ be any finite
sum \eqref{eq:5.3}.  After arbitrary numbers
$\delta_2,\delta_\rho>0$ and an arbitrary lower location $X_0$ are
specified, one can choose $X>X_0$ and
\begin{equation}
 \epsilon=\frac\kappa{\log X}                          \label{eq:5.5}
\end{equation}
so that the supports remain disjoint and right-ordered,
\begin{equation}
 \|\epsilon p_X\|_2<\delta_2,\qquad
 \|\nu_{q+\epsilon p_X}-\nu_q\|_\infty<\delta_\rho,  \label{eq:5.6}
\end{equation}
but the two regular asymptotic profiles satisfy
\begin{equation}
 \|V_{q+\epsilon p_X}-V_q\|_2\ge d_*.                 \label{eq:5.7}
\end{equation}
The insertion may simultaneously be required to satisfy any finite list
of strictly positive upper bounds on $\epsilon$.
\end{proposition}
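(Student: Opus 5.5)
The plan is to compare the two regular profiles only on the fixed interval $J$, using the phase formula \eqref{eq:2.12} of Lemma~\ref{lem:smoothstrong}. Put $\widetilde q=q+\epsilon p_X$. First, the cheap conditions \eqref{eq:5.6} and disjointness. Since $\|\epsilon p_X\|_2=\kappa\|p\|_2/\log X$ and $\|\epsilon p_X\|_1=\kappa\|p\|_1/\log X$, both tend to zero as $X\to\infty$. By \eqref{eq:3.9} we get $\|\nu_{\widetilde q}-\nu_q\|_\infty\le C_M\kappa\|p\|_1/\log X$. Taking $X$ large also places the support of $p_X$ to the right of $\operatorname{supp}q$, and makes $\epsilon$ smaller than any prescribed finite list of bounds. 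It also makes $\epsilon\le E_*-\sum_j\epsilon_j$ once that slack is positive. If the slack is not positive, I would shrink the budget in \eqref{eq:5.3} slightly, e.g.\ by using the open condition from \eqref{eq:3.8}. In either case $\widetilde q$ still satisfies the uniform bounds \eqref{eq:5.4} on $J$.

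Second, the phase comparison on $J$. On $J$ we have $|V|^2=\nu\ge\nu_*$, so both profiles are given there by \eqref{eq:2.12}. The moduli satisfy $\bigl||V_{\widetilde q}|-|V_q|\bigr|=|\sqrt{\nu_{\widetilde q}}-\sqrt{\nu_q}|=O(1/\log X)$ uniformly on $J$. The smooth terms $\arg\Gamma(\ii\nu)$ and $-\nu\log2$ change by $O(1/\log X)$ uniformly, since $\nu_*\le\nu\le C$ on $J$. For $\arg r$, \eqref{eq:3.3} gives $r_{\widetilde q}=r_q+O(\epsilon)$ uniformly, and $|r_q|\ge r_*$ on $J$, so $\arg r$ changes by $O(\epsilon)$. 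The only term that is not small is $\cT L$. By Lemma~\ref{lem:remainder},
\begin{equation*}
 \cT L_{\widetilde q}-\cT L_q=\frac{2\kappa}{\pi}\Re\bigl(e^{\ii Xz}C_q(z)\bigr)+o_{L^2(J)}(1).
\end{equation*}
Hence $\arg V_{\widetilde q}-\arg V_q=\Phi_X+o_{L^2(J)}(1)$, with $\Phi_X(z)=\frac{2\kappa}{\pi}|C_q(z)|\cos(Xz+\arg C_q(z))$.

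Third, the lower bound. Using $|e^{\ii\phi}-1|=2|\sin(\phi/2)|$,
\begin{equation*}
 \|V_{\widetilde q}-V_q\|_{L^2(J)}\ge\sqrt{\nu_*}\,\bigl\|2\sin(\Phi_X/2)\bigr\|_{L^2(J)}-o(1).
\end{equation*}
Fix $\kappa$ so small that $\frac{\kappa}{\pi}\sup_J|C_q|\le\frac{\kappa}{\pi}\sup_J|\beta|\le1$; this choice depends only on the anchor. Then $|\sin(\Phi_X/2)|\ge c|\Phi_X|$. Moreover,
\begin{equation*}
 \int_J|C_q|^2\cos^2(Xz+\arg C_q)\dd z\ge\tfrac14 c_*^2|J|
\end{equation*}
for all large $X$. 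This follows by writing $\cos^2=\frac12+\frac12\cos(2Xz+2\arg C_q)$ and applying Riemann--Lebesgue to the fixed smooth function $C_q^2$ on $J$. That yields $d_*\simeq\kappa c_*\sqrt{\nu_*|J|}$, and this is independent of $q$ within the class \eqref{eq:5.3}.

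The main obstacle is uniformity. Lemma~\ref{lem:remainder} is stated for fixed $q$, but that is harmless here, because $q$ is given before $X$ is chosen. The real care needed is that $\kappa$ and $d_*$ must not depend on $q$. They only involve $c_*$, $\nu_*$, $\sup_J|\beta|$ and $|J|$, while all $o(1)$ terms may depend on $q$ and are killed by choosing $X$ large. A secondary point is branch bookkeeping: the $\pmod{2\pi}$ in \eqref{eq:2.12} is harmless, since only $e^{\ii\arg}$ enters the estimate.
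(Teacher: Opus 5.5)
Your proposal is correct and is essentially the paper's proof: \eqref{eq:4.4} supplies the only order-one phase $\frac{2\kappa}{\pi}\Re(e^{\ii Xz}C_q)$, and Riemann--Lebesgue applied to $C_q^2$ on $J$ bounds it below in $L^2(J)$. The anchor-only choice $\kappa\lesssim1/\sup_J|\beta|$ keeps this phase where $|e^{\ii s}-1|\gtrsim|s|$, and $|V_q|\ge\sqrt{\nu_*}$ gives $d_*\simeq\kappa c_*\sqrt{\nu_*|J|}$, with every $q$-dependent $o(1)$ absorbed by choosing $X$ last.

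The paper bundles the modulus, Gamma, $\log 2$ and $r$ factors into the branch-free quotient $\Phi(r_{q+\epsilon p_X})/\Phi(r_q)=1+h_X$, whereas you compare arguments term by term; this difference is only cosmetic. Your budget-slack detour is unnecessary, because the lower bounds you need for $q+\epsilon p_X$ on $J$ follow directly from \eqref{eq:3.8}--\eqref{eq:3.9} once $X$ is large.
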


\begin{proof}
The branch-free version of the classical profile is, up to one fixed
unimodular constant,
\begin{equation}
 V_q=e^{\ii\cT L_q-\ii\nu_q\log2}
 \mathcal G(\nu_q)\,
 r_q\,g(|r_q|^2),
 \qquad
 g(s)=\left(\frac{-\log(1-s)}{2\pi s}\right)^{1/2},
 \quad g(0)=(2\pi)^{-1/2}.                            \label{eq:5.8}
\end{equation}
Here $0\le s<1$, and the positive square root is used.
Define, for $|r|<1$,
\begin{equation*}
 \nu(r)=-\frac1{2\pi}\log(1-|r|^2),\qquad
 \Phi(r)=e^{-\ii\nu(r)\log2}
  \mathcal G(\nu(r))\,r\,g(|r|^2).
\end{equation*}
By \eqref{eq:5.4}, the values of $r_q$ on $J$ lie in the compact annulus
\begin{equation*}
 \mathcal A=\{r\in\C:r_*\le|r|\le\sqrt{1-\delta_*}\}.
\end{equation*}
The function $\Phi$ is smooth and nonzero on a neighborhood of
$\mathcal A$.  The transfer identity \eqref{eq:3.3}, the $L^1$ bound, and
$|a_q|\ge1$ give, uniformly in the translation $X$ and in every preceding
finite sum allowed by \eqref{eq:5.3},
\begin{equation*}
 \|r_{q+\epsilon p_X}-r_q\|_{L^\infty(J)}\le K\epsilon.
\end{equation*}
Consequently,
\begin{equation*}
 \frac{\Phi(r_{q+\epsilon p_X})}{\Phi(r_q)}=1+h_X,
\end{equation*}
where
\begin{equation*}
 \|h_X\|_{L^\infty(J)}\le K_1\epsilon
\end{equation*}
with $K_1$ independent of $q$ and $X$.  Since
$V_q=e^{\ii\cT L_q}\Phi(r_q)$ up to the same constant, \eqref{eq:4.4}
gives
\begin{equation}
 \frac{V_{q+\epsilon p_X}}{V_q}
 =\exp\!\left\{\frac{2\ii\kappa}{\pi}
       \Re(e^{\ii Xz}C_q(z))
       +\ii o_X(z)\right\}
   (1+h_X).                                           \label{eq:5.8a}
\end{equation}
Here, for each fixed preceding $q$, $o_X=o_{q,X}$ is real and
$\|o_X\|_{L^\infty(J)}+\|o_X\|_{L^2(J)}\to0$; no uniform rate in $q$
is asserted.
This quotient formulation is independent of choices of $\arg r$ and
$\arg\Gamma$ and rules out a hidden branch jump.

Choose
\begin{equation}
 0<\kappa\le \frac{\pi}{8\|\beta\|_{L^\infty(J)}}.      \label{eq:5.8b}
\end{equation}
Then the leading phase in \eqref{eq:5.8a} has absolute value at most
$1/4$, uniformly for the $L^1$ ball in \eqref{eq:5.3}.
For each fixed preceding $q$, the Riemann--Lebesgue lemma gives
\begin{equation}
 \|\Re(e^{\ii X\cdot}C_q)\|_{L^2(J)}^2
 \longrightarrow\frac12\|C_q\|_{L^2(J)}^2
 \ge\frac12c_*^2|J|.                                  \label{eq:5.9}
\end{equation}
More explicitly, put
\begin{equation*}
 d_X=\frac{2\kappa}{\pi}\Re(e^{\ii Xz}C_q),
 \qquad
 \frac{V_{q+\epsilon p_X}}{V_q}=e^{\ii(d_X+o_X)}(1+h_X),
\end{equation*}
where
$\|o_X\|_\infty+\|o_X\|_2+\|h_X\|_\infty\to0$.
After increasing $X$, we may assume $\|o_X\|_\infty\le1/4$.  Using
$|e^{\ii s}-1|\ge|s|/2$ for $|s|\le1$ gives
\begin{equation*}
 \left\|\frac{V_{q+\epsilon p_X}}{V_q}-1\right\|_{L^2(J)}
 \ge\frac12\|d_X\|_{L^2(J)}
      -\|o_X\|_{L^2(J)}-|J|^{1/2}\|h_X\|_{L^\infty(J)}.
\end{equation*}
For all sufficiently large $X$, \eqref{eq:5.9} gives
\begin{equation*}
 \|d_X\|_{L^2(J)}\ge\frac{\kappa c_*|J|^{1/2}}\pi.
\end{equation*}
Increase $X$ once more so that the last two error terms in the preceding
lower bound have sum at most
$\kappa c_*|J|^{1/2}/(4\pi)$.  Since
$|V_q|=\sqrt{\nu_q}\ge\sqrt{\nu_*}$ on $J$, we obtain
\begin{equation*}
 \|V_{q+\epsilon p_X}-V_q\|_2
 \ge \sqrt{\nu_*}
 \left\|\frac{V_{q+\epsilon p_X}}{V_q}-1\right\|_{L^2(J)}
 \ge d_*,
 \qquad
 d_*:=\frac{\kappa c_*\sqrt{\nu_*|J|}}{8\pi}.           \label{eq:5.9a}
\end{equation*}
This proves \eqref{eq:5.7} with a uniform constant once
$X$ exceeds the finitely many thresholds in Lemmas~\ref{lem:multiplier}
and \ref{lem:remainder} and in \eqref{eq:5.9}.

Finally, $\epsilon\to0$ as $X\to\infty$.  Thus the first inequality in
\eqref{eq:5.6}, every additional size budget, and the support condition hold
for large $X$; the second inequality follows from \eqref{eq:3.9}.  The order
of the quantifiers is crucial: $q$ and all budgets are fixed before $X$
is selected.
\end{proof}

\section{Construction of the counterexample}

\subsection{The online gliding-hump construction}

We now construct one fixed initial datum.  Set $t_{-1}=0$.  Starting from
\eqref{eq:5.1}, suppose that, for some $n\ge0$,
\begin{equation}
 q_n=q_0+\sum_{m=1}^n\epsilon_mp(\cdot-X_m)             \label{eq:6.1}
\end{equation}
has been chosen.  Denote its smooth asymptotic profile and density by $V_n$ and
$\nu_n$.

First choose $t_n>\max\{e^{n+1},t_{n-1}+1\}$ so large that
\begin{equation}
 \|e^{\ii(\log t_n)\nu_n}F_{q_n}(t_n)-V_n\|_2<2^{-n}. \label{eq:6.2}
\end{equation}
This uses Lemma~\ref{lem:smoothstrong}.  The fixed-time $L^2$ continuity
of the NLS flow is \cite[Theorem~1.1 and (1.6)]{Tsutsumi}
in one space dimension, for the cubic exponent and coupling constant $2$.
Hence there is $d_n>0$
such that
\begin{equation}
 \|h-q_n\|_2<d_n
 \quad\Longrightarrow\quad
 \|F_h(t_n)-F_{q_n}(t_n)\|_2<2^{-n}.                   \label{eq:6.3}
\end{equation}
Put
\begin{equation*}
 M_1=(\eta+E_*)\|p\|_1,
 \qquad M_2=(\eta+E_*)\|p\|_2.
\end{equation*}
These are a priori bounds for every partial datum permitted by
\eqref{eq:5.3}.  Only after $t_n$ and $d_n$ have been fixed, choose a
future-tail budget
$B_n>0$ so small that
\begin{equation}
 \|p\|_2B_n<d_n,\qquad
 (\log t_n)C_{M_1}\|p\|_1B_nM_2<2^{-n}.              \label{eq:6.4}
\end{equation}

Only now do we use Proposition~\ref{prop:insertion} to choose the next
bump.  More explicitly, after $t_n,d_n,B_n$ are fixed, choose
$X_{n+1}$ and $\epsilon_{n+1}=\kappa/\log X_{n+1}$.  At every stage
$m\ge1$, in addition to all multiplier and support thresholds in
Proposition~\ref{prop:insertion}, impose $X_m>m$ and
\begin{equation}
 \epsilon_m\le E_*2^{-m},\qquad
 \epsilon_m\le2^{-(m-j)}B_j\quad(0\le j<m).            \label{eq:6.5}
\end{equation}
These are only finitely many upper bounds at stage $m$, so
Proposition~\ref{prop:insertion} applies after they are known.
At stage $m$, the preceding datum $q_{m-1}$ and all quantities
$t_j,d_j,B_j$, $0\le j<m$, have already been fixed.  We apply
Proposition~\ref{prop:insertion} with $q=q_{m-1}$ and with the lower
location chosen beyond the right endpoint of $\operatorname{supp}q_{m-1}$.
The thresholds supplied by \eqref{eq:4.3} and \eqref{eq:5.9} may depend on
this fixed $q_{m-1}$; no convergence rate uniform in the preceding datum is
needed, since $X_m$ is selected only afterwards.  The proposition also gives
\begin{equation}
 \|V_m-V_{m-1}\|_2\ge d_*                             \label{eq:6.6}
\end{equation}
at every stage.  For each fixed $n$, \eqref{eq:6.5} implies
\begin{equation}
 \sum_{m>n}\epsilon_m
 \le B_n\sum_{k=1}^\infty2^{-k}=B_n.                  \label{eq:6.7}
\end{equation}

The disjoint-support limit
\begin{equation}
 q_*=q_0+\sum_{m=1}^\infty\epsilon_mp(\cdot-X_m)       \label{eq:6.8}
\end{equation}
exists in both $L^1$ and $L^2$.  Because $X_m\to\infty$, the sum is
locally finite and therefore defines a $C^\infty$ representative of $q_*$.
In fact, for every
$k\ge0$,
\begin{equation}
 \|\partial_x^kq_*\|_2^2
 =\left(\eta^2+\sum_{m=1}^\infty\epsilon_m^2\right)
   \|\partial_x^kp\|_2^2<\infty,                      \label{eq:6.9}
\end{equation}
so the regularity assertion in \eqref{eq:1.5} holds.  Since the translated supports lie in
$(X_m,X_m+1)$,
\begin{equation*}
 \|xq_*\|_2^2\ge\|p\|_2^2
 \sum_{m=1}^\infty\epsilon_m^2X_m^2=\infty;
\end{equation*}
indeed $\epsilon_mX_m=\kappa X_m/\log X_m\to\infty$.  Thus the datum
fails the first $L^2$ spatial moment, completing the proof of
\eqref{eq:1.5}.
Its transmission density is the uniform limit of
the densities of $q_n$: by \eqref{eq:3.9} and \eqref{eq:6.7},
\begin{equation}
 \|\nu_{q_*}-\nu_n\|_\infty
 \le C_{M_1}\|p\|_1B_n.                              \label{eq:6.10}
\end{equation}

Set
\begin{equation}
 Z_*(t)=e^{\ii(\log t)\nu_{q_*}}F_{q_*}(t).            \label{eq:6.11}
\end{equation}
Equations \eqref{eq:6.2}--\eqref{eq:6.4} and \eqref{eq:6.10} give
\begin{align}
 \|Z_*(t_n)-V_n\|_2
 &\le\|F_{q_*}(t_n)-F_{q_n}(t_n)\|_2\notag\\
 &\quad+\|(e^{\ii(\log t_n)(\nu_{q_*}-\nu_n)}-1)
            F_{q_n}(t_n)\|_2\notag\\
 &\quad+\|e^{\ii(\log t_n)\nu_n}F_{q_n}(t_n)-V_n\|_2\notag\\
 &<3\cdot2^{-n}.                                       \label{eq:6.12}
\end{align}
Here
\begin{equation*}
 \|q_*-q_n\|_2
 =\|p\|_2\left(\sum_{m>n}\epsilon_m^2\right)^{1/2}
 \le\|p\|_2\sum_{m>n}\epsilon_m<d_n,
\end{equation*}
by \eqref{eq:6.7} and \eqref{eq:6.4}, so the first term is controlled by
the already-fixed continuity ball \eqref{eq:6.3}.  The second uses
$|e^{\ii s}-1|\le|s|$, mass conservation, and \eqref{eq:6.10}.
More explicitly, $\|F_{q_n}(t_n)\|_2=\|q_n\|_2\le M_2$, and hence
\begin{equation*}
 \left\|(e^{\ii(\log t_n)(\nu_{q_*}-\nu_n)}-1)
             F_{q_n}(t_n)\right\|_2
 \le(\log t_n)C_{M_1}\|p\|_1B_nM_2<2^{-n}.
\end{equation*}

By \eqref{eq:6.6}, $(V_n)$ is not Cauchy, so \eqref{eq:6.12} shows that
$Z_*(t)$ has no strong $L^2$ limit.  Proposition~\ref{prop:necessity}
says that any profile in \eqref{eq:1.3} would have modulus
$\nu_{q_*}^{1/2}$ and would force $Z_*(t)$ to converge.  This
contradiction proves the first assertion of Theorem~\ref{thm:main}.

\begin{remark}[Why the construction must be adaptive]
It is not enough to choose in advance a Cauchy sequence of regular initial
data with non-Cauchy profiles.  The continuity radius in \eqref{eq:6.3} may
shrink arbitrarily as $t_n\to\infty$, while the factor $\log t_n$ in
\eqref{eq:6.4} may be arbitrarily large.  The $n$th observation time and its
finite-time continuity radius must be fixed before the sizes of every
future bump.  The freedom $\epsilon\log X=\kappa$ is exactly what permits
an arbitrarily small insertion after those budgets while retaining the
fixed profile jump \eqref{eq:5.7}.
\end{remark}

\subsection{Prescribed \texorpdfstring{$L^2$}{L2} norms}

For $\lambda>0$, the NLS scaling is
\begin{equation}
 q^{(\lambda)}(t,x)=\lambda q(\lambda^2t,\lambda x),
 \qquad
 \|q^{(\lambda)}(0)\|_2=\lambda^{1/2}\|q(0)\|_2.      \label{eq:7.1}
\end{equation}
The Fourier profile obeys the exact scaling identity
\begin{equation*}
 F_{q^{(\lambda)}}(t,\xi)=F_q(\lambda^2t,\xi/\lambda).
\end{equation*}
Suppose that $W\in L^2(\R)$ were a profile for $q^{(\lambda)}$, and define
\begin{equation*}
 V(\eta)=e^{2\ii(\log\lambda)|W(\lambda\eta)|^2}
          W(\lambda\eta).
\end{equation*}
Then $|V(\eta)|^2=|W(\lambda\eta)|^2$.  Setting $s=\lambda^2t$ and
changing variables $\xi=\lambda\eta$, we obtain
\begin{align*}
 &\left\|F_q(s,\eta)
   -e^{-\ii(\log s)|V(\eta)|^2}V(\eta)\right\|_{L^2_\eta}\\
 &\qquad=\lambda^{-1/2}
  \left\|F_{q^{(\lambda)}}(s/\lambda^2,\xi)
   -e^{-\ii\log(s/\lambda^2)|W(\xi)|^2}W(\xi)
  \right\|_{L^2_\xi}\longrightarrow0.
\end{align*}
Thus a fixed profile for $q^{(\lambda)}$ would produce one for $q$.
Applying the same argument with $1/\lambda$ gives the converse.  Taking
\begin{equation}
 \lambda=\left(\frac{M}{\|q_*\|_2}\right)^2          \label{eq:7.2}
\end{equation}
proves the prescribed-norm assertion in Theorem~\ref{thm:main}.  Moreover,
\begin{equation*}
 \|q^{(\lambda)}(0)\|_1=\|q(0)\|_1,\qquad
 \|\partial_x^kq^{(\lambda)}(0)\|_2
 =\lambda^{k+1/2}\|\partial_x^kq(0)\|_2,
\end{equation*}
and
\begin{equation*}
 \|xq^{(\lambda)}(0)\|_2^2
 =\lambda^{-1}\|xq(0)\|_2^2
\end{equation*}
in the extended sense.  Hence scaling preserves all stated regularity and
the failure of the first $L^2$ moment.

\subsection{Discussion and open questions}

Theorem~\ref{thm:main} identifies a qualitative obstruction at the
unweighted $L^2$ endpoint: the standard logarithmic correction cannot be
encoded by a single time-independent $L^2$ profile.  The conserved
transmission density still determines the only possible modulus in such a
description, but the far-bump construction places phase information at
progressively finer spectral scales that no fixed profile retains.

Theorem~\ref{thm:main} does not determine the correct asymptotic object for arbitrary
unweighted data.  It shows only that the classical fixed-profile modified-scattering picture cannot survive unchanged at the pure $L^2$ endpoint. The theorem therefore shifts the question from proving convergence to a fixed profile to identifying the appropriate asymptotic object for general unweighted solutions.
\section*{Declaration on the use of AI}
During the development of this work, the author used ChatGPT to assist in the search for a possible counterexample. The counterexample underlying the main result was first suggested through this interaction.


\begin{thebibliography}{99}

\bibitem{BessonovDenisov}
R.~V. Bessonov and S.~A. Denisov,
\emph{Sobolev norms of $L^2$-solutions to the nonlinear Schr\"odinger
equation},
Pacific J. Math. \textbf{331} (2024), no.~2, 217--258,
\href{https://doi.org/10.2140/pjm.2024.331.217}{doi:10.2140/pjm.2024.331.217}.

\bibitem{BessonovGubkin}
R.~V. Bessonov and P.~V. Gubkin,
\emph{Direct and inverse spectral continuity for Dirac operators},
Geom. Funct. Anal. \textbf{36} (2026), 351--411,
\href{https://doi.org/10.1007/s00039-026-00735-3}{doi:10.1007/s00039-026-00735-3}.

\bibitem{DeiftZhou}
P.~Deift and X.~Zhou,
\emph{Long-time asymptotics for solutions of the NLS equation with initial
data in a weighted Sobolev space},
Comm. Pure Appl. Math. \textbf{56} (2003), no.~8, 1029--1077,
\href{https://doi.org/10.1002/cpa.3034}{doi:10.1002/cpa.3034}.

\bibitem{DenisovNCC}
S.~A. Denisov,
\emph{The strong version of nonlinear Carleson conjecture fails},
\href{https://arxiv.org/abs/2605.01658}{arXiv:2605.01658} (2026).

\bibitem{GeorgievOzawa}
V.~Georgiev and T.~Ozawa,
\emph{On completeness of modified wave operators for defocusing NLS},
\href{https://arxiv.org/abs/2509.15921}{arXiv:2509.15921v2} (2025).

\bibitem{HayashiNaumkin}
N.~Hayashi and P.~I. Naumkin,
\emph{Asymptotics for large time of solutions to the nonlinear
Schr\"odinger and Hartree equations},
Amer. J. Math. \textbf{120} (1998), no.~2, 369--389,
\href{https://doi.org/10.1353/ajm.1998.0011}{doi:10.1353/ajm.1998.0011}.

\bibitem{IfrimTataru}
M.~Ifrim and D.~Tataru,
\emph{Global solutions for 1D cubic defocusing dispersive equations:
Part I},
Forum Math. Pi \textbf{11} (2023), e31,
\href{https://doi.org/10.1017/fmp.2023.30}{doi:10.1017/fmp.2023.30}.

\bibitem{KawamotoMizutani}
M.~Kawamoto and H.~Mizutani,
\emph{Modified wave operators for the defocusing cubic nonlinear
Schr\"odinger equation in one space dimension with large scattering data},
\href{https://arxiv.org/abs/2506.01871}{arXiv:2506.01871v3} (2025).

\bibitem{Ozawa}
T.~Ozawa,
\emph{Long range scattering for nonlinear Schr\"odinger equations in one
space dimension},
Comm. Math. Phys. \textbf{139} (1991), no.~3, 479--493,
\href{https://doi.org/10.1007/BF02101876}{doi:10.1007/BF02101876}.

\bibitem{Tsutsumi}
Y.~Tsutsumi,
\emph{$L^2$-solutions for nonlinear Schr\"odinger equations and nonlinear
groups},
Funkcial. Ekvac. \textbf{30} (1987), no.~1, 115--125,
\href{https://da.lib.kobe-u.ac.jp/da/kernel/0100499535/}{Kobe University repository}.

\end{thebibliography}
\end{document}